\documentclass[10pt,a4paper,twoside]{amsart}
\usepackage{amsmath,amsfonts,amsthm,amsopn,color,amssymb,enumitem}
\usepackage{palatino}
\usepackage{graphicx}
\usepackage[colorlinks=true]{hyperref}
\hypersetup{urlcolor=blue, citecolor=red, linkcolor=blue}

\newcommand{\e}{\varepsilon}

\newcommand{\C}{\mathbb{C}}
\newcommand{\R}{\mathbb{R}}

\newcommand{\RD}{{\mathbb{R}^2}}

\renewcommand{\le}{\leslant}
\renewcommand{\ge}{\geslant}
\renewcommand{\a }{\alpha }

\renewcommand{\b }{\beta }

\newcommand{\I}{\mathcal{I}}

\renewcommand{\C}{\mathbb{C}}
\renewcommand{\o}{\omega}

\def\bbm[#1]{\mbox{\boldmath $#1$}}
\newcommand{\beq }{\begin{equation}}
\newcommand{\eeq }{\end{equation}}

\newcommand{\dis}{\displaystyle}

\renewcommand{\le}{\leqslant}
\renewcommand{\ge}{\geqslant}

\newcommand{\ir}{\int_{-\infty}^{+\infty}}
\newcommand{\ird }

\newtheorem{theorem}{Theorem}[section]
\newtheorem{lemma}[theorem]{Lemma}

\newtheorem{proposition}[theorem]{Proposition}
\newtheorem{remark}[theorem]{Remark}
\newtheorem{corollary}[theorem]{Corollary}

\title[Boundary concentration of a Gauged Nonlinear Schr\"{o}dinger Equation]{Boundary concentration of a Gauged Nonlinear Schr\"{o}dinger Equation on large balls}

\author[Pomponio]{Alessio Pomponio$^1$}
\address{$^1$Dipartimento di Meccanica, Matematica e Management, Politecnico di
Bari, Via E. Orabona 4, 70125 Bari, Italy.}
\author[Ruiz]{David Ruiz$^2$}
\address{$^2$Dpto. An\'{a}lisis Matem\'{a}tico, Granada, 18071 Spain.}

\thanks{A.P. is supported by M.I.U.R. - P.R.I.N.
``Metodi variazionali e topologici nello studio di fenomeni non
lineari'', by GNAMPA Project ``Metodi Variazionali e Problemi
Ellittici Non Lineari'' and by FRA2011 ``Equazioni ellittiche di
tipo Born-Infeld". D.R. is supported by the Spanish Ministry of
Science and Innovation under Grant MTM2011-26717 and by J.
Andalucia (FQM 116).}

\email{a.pomponio@poliba.it, daruiz@ugr.es}
\date{}

\keywords{Gauged Schr\"{o}dinger Equations,
Lyapunov-Schmidt reduction.}

\subjclass[2010]{35J20, 35Q55.}



\begin{document}

\maketitle


\begin{abstract}

This paper is motivated by a gauged Schr\"{o}dinger equation in
dimension 2 including the so-called Chern-Simons term. The
radially symmetric case leads to an elliptic problem with a
nonlocal defocusing term, in competition with a local focusing
nonlinearity. In this work we pose the equations in a ball under
homogeneous Dirichlet boundary conditions. By using singular
perturbation arguments we prove existence of solutions for large
values of the radius. Those solutions are located close to the boundary
and the limit profile is given.

\end{abstract}

\section{Introduction}

Let us consider a gauged Nonlinear Schr\"{o}dinger Equation in the
plane: \beq \label{planar} i D_0\phi + (D_1D_1 +D_2D_2)\phi +
|\phi|^{p-1} \phi =0. \eeq Here $t \in \R$, $x=(x_1, x_2) \in
\R^2$, $\phi : \R \times \R^2 \to \C$ is the scalar field, $A_\mu
: \R\times \R^2 \to \R$ are the components of the gauge potential
and $D_\mu =
\partial_\mu + i A_\mu$ is the covariant derivative ($\mu = 0,\
1,\ 2$).

The natural equations for the gauge potential $A_\mu$ are the
Maxwell equations. However, the modified gauge field equation
proposes to include also the so-called Chern-Simons term, that is,

\[ 
\partial_\mu F^{\mu \nu} + \frac 1 2
\kappa \epsilon^{\nu \alpha \beta} F_{\alpha \beta}= j^\nu, \
\mbox{ with } \ F_{\mu \nu}=
\partial_\mu A_\nu - \partial_\nu A_\mu. \]

In the above equation, $\kappa$ is a parameter that measures the
strength of the Chern-Simons term. As usual, $\epsilon^{\nu \alpha
\beta}$ is the Levi-Civita tensor, and super-indices are related
to the Minkowski metric with signature $(1,-1,-1)$. Finally,
$j^\mu$ is the conserved matter current,

$$ j^0= |\phi|^2, \ j^i= 2 {\rm Im} \left (\bar{\phi} D_i \phi \right).$$

At low energies, the Maxwell term becomes negligible and can be
dropped, giving rise to: \beq \label{cs} \frac 1 2 \kappa
\epsilon^{\nu \alpha \beta} F_{\alpha \beta}= j^\nu. \eeq See
\cite{hagen, hagen2, jackiw0, jackiw, jackiw2} and \cite[Chapter
1]{tar} for the discussion above.

For the sake of simplicity, let us normalize so that $\kappa = 2$.
Equations \eqref{planar} and \eqref{cs} lead us to the problem:
 \beq \label{eq:e0}
\begin{array}{l}
i D_0\phi + (D_1D_1 +D_2D_2)\phi + |\phi|^{p-1} \phi =0,\\
\partial_0 A_1  -  \partial_1 A_0  = {\rm Im}( \bar{\phi}D_2\phi), \\
\partial_0 A_2  - \partial_2 A_0 = -{\rm Im}( \bar{\phi}D_1\phi), \\
\partial_1 A_2  -  \partial_2 A_1 =  \frac 1 2 |\phi|^2. \end{array}
\eeq

As usual in Chern-Simons theory, problem \eqref{eq:e0} is
invariant under gauge transformation,
\[
\phi \to
\phi e^{i\chi}, \quad A_\mu \to A_\mu - \partial_{\mu} \chi
\]
for any arbitrary $C^\infty$ function $\chi$.

This model was first proposed and studied in \cite{jackiw0,
jackiw, jackiw2}, and sometimes has received the name of
Chern-Simons-Schr\"{o}dinger equation. The initial value problem,
as well as global existence and blow-up, has been addressed in
\cite{berge, huh, huh3} for the case $p=3$.

The existence of stationary states for \eqref{eq:e0} and general
$p>1$  has been studied recently in \cite{byeon, huh2, noi}. By
using the ansatz:
\begin{equation*}
\begin{array} {lll} \phi(t,x) = u(|x|) e^{i \omega t}, && A_0(x)= A_0(|x|),
\\
A_1(t,x)=\dis -\frac{x_2}{|x|^2}h(|x|), && A_2(t,x)= \dis
\frac{x_1}{|x|^2}h(|x|),
\end{array}
\end{equation*}
and after some manipulations, in \cite{byeon} it is found that:
\beq \label{equation}   - \Delta u(x) + \left( \o + \dis
\frac{h^2(|x|)}{|x|^2} +  \int_{|x|}^{+\infty} \frac{h(s)}{s}
u^2(s)\, ds   \right)  u(x)  =|u(x)|^{p-1}u(x), \quad \hbox{in
}\RD, \eeq where
$$h(r)= \frac 12\int_0^r s u^2(s)  \, ds.$$
Here $A_0$ takes the form:

$$A_0(r)=  \int_r^{+\infty} \frac{h(s)}{s} u^2(s)\, ds.$$

Observe that \eqref{equation} is a nonlocal equation. Moreover, in
\cite{byeon} it is shown that \eqref{equation} is indeed the
Euler-Lagrange equation of the energy functional:
$$ I : H_r^1(\R^2) \to \R, $$ defined as
\begin{align*}
I (u) & = \dis \frac 12 \int_{\R^2} \left(|\nabla u(x)|^2 +
\o u^2(x) \right) \, dx \nonumber
\\
 & \quad+\dis \frac{1}{8} \int_{\R^2}
\frac{u^2(x)}{|x|^2}\left(\int_0^r s u^2(s) \, ds\right)^2 dx  -
\dis \frac{1}{p+1}  \int_{\R^2} |u(x)|^{p+1} \, dx.
\end{align*}
Here $H^1_r(\R^2)$ is the Sobolev space of radially symmetric
functions. In \cite{byeon, huh2, noi} critical points of
$I$ are found by using variational methods. It is shown
that the value $p=3$ is critical for this problem. Indeed, for
$p>3$ the energy functional is unbounded from below and satisfies
a mountain-pass geometry. Even if the (PS) property is not known
to be satisfied, positive solutions are found in \cite{byeon}.

Instead, for $p \in (1,3)$ the global behavior of $I$ is
different and depends on the parameter $\o$. In \cite{noi} we show
that $I$ is bounded from below if and only if $\o \ge
\o_0$, where the threshold value $\o_0$ is explicit, namely:

\[ 
\omega_0= \frac{3-p}{3+p}\
3^{\frac{p-1}{2(3-p)}}\ 2^{\frac{2}{3-p}} \left(
\frac{m^2(3+p)}{p-1}\right)^{-\frac{p-1}{2(3-p)}}. \] Here $m = \ir w_1^2(r)\, dr$, and $w_1$ is the unique positive even solution of the problem $-w'' + w= w^p$ in $\R$, i.e.,
\beq \label{blabla}  w_1(r)= \left ( \frac{2}{p+1} \cosh^2 \left
(\frac{p-1}{2} r \right) \right)^{\frac{1}{1-p}}.\eeq

In this paper we study positive solutions for the Dirichlet
problem posed on a ball $B(0,R)$, that is:

\beq \label{equation0} \left \{ \begin{array}{ll}  - \Delta u(x) +
\left( \o + \dis \frac{h^2(|x|)}{|x|^2} +  \int_{|x|}^{R}
\frac{h(s)}{s} u^2(s)\, ds   \right)  u(x)  =u^p(x),&  x \in
B(0,R), \\
u(x)=0, & x \in \partial B(0,R),
\end{array} \right. \eeq where, as above,
$$h(r)= \frac 12\int_0^r s u^2(s)  \, ds.$$ Let us denote by
$H_R= H^1_{0,r}(B(0,R))$ the Sobolev Space of radially symmetric
functions. Again, \eqref{equation0} is the Euler-Lagrange equation
of the functional $I_R : H_R \to \R$, $I_R = {I}|_{H_R}$.

A first motivation for this study comes from the proofs of
\cite{noi}. It is not difficult to show that $I_R$ is coercive for any $R$ fixed. So, there exists a
minimizer $u_R$ of $I_R $. In order to prove boundedness of $u_R$
when $R\to +\infty$, the problem is the possible loss of mass at
infinity. In \cite{noi} we prove that, if unbounded, the sequence
$u_R$ actually behaves as a soliton, when $u_R$ is interpreted as
a function of a single real variable. Then, the energy functional
$I$ admits a natural approximation through a convenient
limit functional $J$, see Section 2 for details. Moreover, the
solutions of that limit functional, and their energy, can be found
explicitly, and this leads us to a contradiction if  $\o \ge
\o_0$.

In this paper we show that this kind of solutions actually exists
for $\o < \o_0$. Moreover, their energy tends to $-\infty$ as $R
\to +\infty$, and they seem to be local minima of $I$. Therefore,
they could correspond to global minima of $I$, which is an
interesting phenomenon: the minima of the energy tend to aggregate
near the boundary of the domain.

A second motivation comes from reference \cite{amni}. Observe that
for any $u$, the function:

$$ \dis \frac{h^2(|x|)}{|x|^2} +  \int_{|x|}^{R}
\frac{h(s)}{s} u^2(s)\, ds,   $$ is decreasing in $|x|$.
Therefore, a solution like in \cite{amni} could exist. However
here the problem is completely different, since $h$ itself depends
on $u$ in a nonlocal fashion. Indeed, natural restrictions on $\o$
and $p$ appear, as commented above.

The main result of this paper is the following:

\begin{theorem} \label{main} Assume $p \in (1,3)$, $\o \in
(0,\o_0)$. Then, for $R$ large enough, problem \eqref{equation0}
admits a positive solution $\tilde{u}_R \sim U(r-\rho)$, with
$$ U(r)=  k_2^{\frac{1}{p-1}}w_1(\sqrt{k_2}r). $$
Here $w_1$ is given by \eqref{blabla} and $k_2$ is the largest
positive root of the equation:
$$ k= \o + \frac 1 4 m^2 k^{\frac{5-p}{p-1}},\ k>0.
$$ Finally,
$$\rho \sim R- \frac{1}{2 \sqrt{k_2}} \log R.$$

\end{theorem}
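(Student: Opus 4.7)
The natural approach is a Lyapunov--Schmidt reduction around the one-parameter family of approximate solutions $U_\rho(r):=U(r-\rho)$, suitably truncated to satisfy the Dirichlet condition $u(R)=0$, with $\rho\in(0,R)$ close to $R$ playing the role of the free parameter. I would first rewrite \eqref{equation0} in the radial variable on $[0,R]$ and justify the choice of $k_2$: plugging $u\simeq U_\rho$ one checks that, for $r$ in the concentration region, $h(r)\simeq \frac{\rho}{2}F(r-\rho)$ with $F(t):=\int_{-\infty}^t U^2(s)\,ds$, and verifies the pointwise identity
\[
\frac{F(t)^2}{4}+\frac{\Psi(t)}{2}\equiv \frac{M^2}{4},
\]
where $M:=\int_\R U^2$ and $\Psi(t):=\int_t^{+\infty}F(s)U^2(s)\,ds$ (both sides are constant because the derivatives of $F^2/4$ and $\Psi/2$ cancel, and the common value at $t=+\infty$ is $M^2/4$). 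This implies that the full nonlocal coefficient $h^2(r)/r^2+\int_r^R h(s)u^2(s)/s\,ds$ is essentially the constant $\frac{1}{4}M^2=\frac{1}{4}m^2 k_2^{(5-p)/(p-1)}$. Adding $\omega$ reproduces precisely the algebraic equation $k=\omega+\tfrac14 m^2 k^{(5-p)/(p-1)}$, whose larger positive root (existing by $\omega<\omega_0$) is $k_2$; with the scaling $U(r)=k_2^{1/(p-1)}w_1(\sqrt{k_2}\,r)$, $U$ is then the unique even positive solution of $-U''+k_2 U=U^p$ on $\R$.

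The second step is the Lyapunov--Schmidt reduction itself. I would seek $u=\widetilde U_\rho+v$ with $\widetilde U_\rho$ an $H_R$-projection of $U_\rho$ (enforcing the Dirichlet condition) and $v$ orthogonal in $L^2_r$ to the tangent direction $U_\rho'$. The full equation splits into an auxiliary equation in $v$ and a one-dimensional reduced equation in $\rho$. The limit 1D linearisation $L:=-\partial_r^2+k_2-pU^{p-1}$ has, by classical non-degeneracy of $w_1$, kernel exactly spanned by $U'$; this gives a uniform spectral gap on the orthogonal complement, so the auxiliary equation can be solved by a contraction argument. The main errors to track are (i) the Dirichlet truncation at $r=R$, of size $O(e^{-\sqrt{k_2}(R-\rho)})$, (ii) the radial weight $1/r$ in the 2D Laplacian, of size $O(1/R)$, and (iii) the deviation of the nonlocal coefficient from $M^2/4$, which by the expansion above is $O(1/R)$ plus exponentially small remainders. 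A contraction then yields a smooth $\rho\mapsto v(\rho)$ whose norm is controlled by these error sizes, provided $R-\rho\to+\infty$ and $R\to+\infty$.

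The final step is to solve the reduced equation $\Phi'(\rho)=0$ for $\Phi(\rho):=I_R(\widetilde U_\rho+v(\rho))$. A careful expansion in the regime $\rho=R-\tau$, with $\tau$ growing, isolates two competing $\rho$-dependent contributions: the Dirichlet cut-off at $r=R$ produces a term of size $c_1 e^{-2\sqrt{k_2}(R-\rho)}$, while the finite-size corrections (the $r^{-1}$ weight in the Laplacian together with the first-order $\rho$-deviation of the nonlocal coefficient) produce a further bulk contribution of size $c_2/R$. Once it is verified that the leading $O(1)$ bulk action is stationary in $\rho$ (the delicate consistency check behind the choice of $k_2$), the equation $\Phi'(\rho)=0$ reduces at leading order to $c_1 e^{-2\sqrt{k_2}(R-\rho)}+c_2/R=0$, yielding $\rho\sim R-\frac{1}{2\sqrt{k_2}}\log R$. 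Positivity of $\tilde u_R$ for large $R$ follows a posteriori since $v(\rho)$ is much smaller than $\widetilde U_\rho$ on its support. The main obstacle, and the technical heart of the proof, is exactly this sharp expansion of $\Phi$: the quartic nonlocal term and the radial weight generate several competing contributions whose cancellations must be carefully tracked in order to isolate the correct constants $c_1,c_2$ and to verify the $O(1)$ stationarity that legitimises the $\log R$ balance.
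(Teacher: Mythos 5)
Your overall skeleton (a Lyapunov--Schmidt reduction around a translated--truncated profile, with a boundary correction and a one-dimensional reduced equation in $\rho$) is the same as the paper's, and your pointwise identity $\tfrac14 F^2+\tfrac12\Psi\equiv\tfrac14 M^2$ correctly explains why the limit coefficient is the constant $k_2-\omega$. However, there are two genuine gaps. The first concerns the linearised operator. You claim the relevant linearisation is $-\partial_r^2+k_2-pU^{p-1}$ and invoke the classical non-degeneracy of $w_1$. This is not the Fr\'echet derivative of the equation: since $h$ depends on $u$, differentiating the nonlocal coefficient produces the additional $O(1)$ rank-one term $K[\phi]=\bigl(\int_{\R}U^2\bigr)\bigl(\int_{\R}U\phi\bigr)U$, so the operator to invert is $L=T+K$ with $T=-\partial_r^2+k_2-pU^{p-1}$. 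One cannot freeze the coefficient: $K$ is not small, and its presence changes the answer --- the paper shows (Proposition \ref{nd0}) that $L$ actually \emph{is} degenerate at $\omega=\omega_1$, and that injectivity for $\omega\in(0,\omega_1)$ requires an explicit ODE computation, while the coercivity of $L$ on $\{U'\}^{\perp}$ (needed for the contraction and for the later expansion) uses that $w_{k_2}$ is a non-degenerate \emph{minimiser} of the limit functional $J$, which is where $\omega<\omega_0$ first enters. Your argument, as written, proves invertibility of the wrong operator.

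The second gap is in the reduced equation. You assert that the leading $O(1)$ bulk action is stationary in $\rho$ and that the $1/R$ term balancing the boundary repulsion comes from the radial weight and the first-order deviation of the nonlocal coefficient. In fact the bulk action is \emph{not} stationary: the expansion is $\Phi(\rho)=2\pi J(U)\rho+2\pi\sqrt{k_2}\,\rho\,e^{-2\sqrt{k_2}(R-\rho)}+o(\cdot)$, where $J$ is the limit functional \eqref{def J}, so the drift $\partial_\rho\bigl(2\pi J(U)\rho\bigr)=2\pi J(U)$ is a nonzero constant. The balance determining $\rho$ is between this constant drift and the derivative of the boundary term, i.e.\ $e^{-2\sqrt{k_2}(R-\rho)}\sim -J(U)/(2k_2 R)$; this only admits a solution because $J(U)<0$, which is precisely the content of the hypothesis $\omega<\omega_0$ (for $\omega\in(\omega_0,\omega_1)$ the root $k_2$ still exists but $J(w_{k_2})\ge 0$ and the mechanism fails). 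Your proposal never identifies the sign condition on the two competing constants, and uses $\omega<\omega_0$ only to guarantee existence of $k_2$, for which $\omega<\omega_1$ would suffice; so the role of the actual threshold $\omega_0$ --- the heart of the theorem --- is missing.
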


As commented above, the assumption $\o \in (0,\o_0)$ is natural
for this result. Moreover, observe that $\o_0$ is well defined
only for $p \in (1,3)$, and it tends to $0$ as $p$ tends to $3$.

The proof uses a singular perturbation method based on a
Lyapunov-Schmidt reduction, which has become a well-known
technique in this kind of problems. However, there are some new
aspects on the problem under consideration. First, our problem
involves a nonlocal term which makes the argument of the proof
more complicated. We point out that it is not just a technical
difficulty, since the existence result depends heavily on the
nonlocal term. Indeed, those solutions do not exist for the autonomous Nonlinear Schr\"{o}dinger Equation, and are a special
feature of the model studied in this paper.

Moreover, the limit functional is different from the usual one in
Nonlinear Schr\"{o}dinger Equations and contains also a nonlocal term. As a consequence, we also need
to prove the non-degeneracy of the limit solution, which in this
case corresponds to a global minimum of the limit functional.

The rest of the paper is organized as follows. Section 2 is
devoted to some preliminary results. The limit functional and its
properties are exposed here. Moreover, we prove that the limit
solution is non-degenerate. The last two sections are devoted to the proof of
Theorem \ref{main}. Section 3 is devoted to solve the auxiliary equation in the Lyapunov-Schmidt reduction, whereas in Section 4 we prove the existence of a minimum for the reduced functional.

\subsection*{Acknowledgement}
This work has been partially carried out during a stay of A.P. in
Granada. He would like to express his deep gratitude to the
Departamento de An\'{a}lisis Matem\'{a}tico for the support and warm
hospitality.

\section{Preliminaries}

Let us first fix some notations. We denote by $H_R=H_{0,r}^1(B(0,R))$ the Sobolev space of radially symmetric functions. We use $\langle \cdot \rangle$, $\|
\cdot \|$ to denote its usual scalar product and norm, whereas
other norms, like Lebesgue norms in $B(0,R)\subset \R^2$, will be indicated with a
subscript. Moreover, $\langle \cdot \rangle_{H^1(a,b)}$,
$\|\cdot\|_{H^1(a,b)}$ are used to indicate the scalar product and
norm of the Sobolev space of dimension $1$. If nothing is
specified, strong and weak convergence of sequences of functions
are assumed in the space $H_R$.

In our estimates, we will frequently denote by $C>0$, $c>0$ fixed
constants, that may change from line to line, but are always
independent of the variable under consideration. We also use the
notations $O(1), o(1), O(\e), o(\e)$ to describe the asymptotic
behaviors of quantities in a standard way. Finally the letter
$x$ indicates a two-dimensional variable, and $r$, $s$, $t$ denote
one-dimensional variables.

\

Let us start with the following proposition:
\begin{proposition} $I_R $ is a $C^1$ functional, and its critical
points correspond to classical solutions of \eqref{equation0}.
\end{proposition}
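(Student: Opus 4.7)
The plan is to decompose $I_R = Q + N - P$ into three pieces: the quadratic part
\[
Q(u) = \tfrac{1}{2}\int_{B(0,R)}(|\nabla u|^2 + \o u^2)\,dx,
\]
the nonlocal quartic
\[
N(u) = \tfrac{1}{8}\int_{B(0,R)} \frac{u^2(x)}{|x|^2}\left(\int_0^{|x|} s u^2(s)\,ds\right)^{\!2} dx,
\]
and the power nonlinearity $P(u) = \tfrac{1}{p+1}\int_{B(0,R)}|u|^{p+1}\,dx$, and to verify $C^1$ smoothness of each piece in turn. The quadratic form $Q$ is trivially $C^1$ on $H_R$. For $P$, I would invoke the Sobolev embedding $H^1_0(B(0,R)) \hookrightarrow L^{q}(B(0,R))$, continuous (indeed compact) for every $q \in [1,\infty)$ since the ambient dimension is two, and conclude that $P \in C^1(H_R)$ with $\langle P'(u), v\rangle = \int |u|^{p-1}u\, v\,dx$.

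The real work lies in the nonlocal term $N$. I plan to control it via the Cauchy--Schwarz bound
\[
h(r)^2 = \frac{1}{4}\left(\int_0^r s\, u^2(s)\,ds\right)^{\!2} \le \frac{r^2}{8}\int_0^r s u^4(s)\,ds,
\]
which shows that $h(r)^2/r^2$ is dominated by a multiple of $\|u\|_{L^4}^4$, and hence $|N(u)| \le C\|u\|^6$ by H\"older and Sobolev. The Gateaux derivative can then be computed directly: from $h_t(r) = h(r) + t\int_0^r s u v\,ds + O(t^2)$ one obtains
\[
\frac{d}{dt}\Big|_{t=0} N(u+tv) = \int_{B(0,R)} \frac{u v\, h^2}{|x|^2}\,dx + \int_{B(0,R)} \frac{u^2(x)\, h(|x|) }{|x|^2}\left(\int_0^{|x|} s u(s) v(s)\, ds\right) dx.
\]
The crucial step is a Fubini interchange of the nested integral in the second summand, rewriting it as
\[
\int_{B(0,R)} u(x) v(x) \left(\int_{|x|}^R \frac{h(s) u^2(s)}{s}\,ds\right) dx,
\]
which produces precisely the nonlocal coefficient appearing in \eqref{equation0}. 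Entirely analogous estimates give norm-continuity of $u \mapsto N'(u)$, so $N \in C^1(H_R)$.

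Combining the three pieces, $\langle I_R'(u), v\rangle = 0$ for all $v \in H_R$ is exactly the weak formulation of the radial Dirichlet problem \eqref{equation0}. To promote weak critical points to classical solutions I plan a standard bootstrap: the same Cauchy--Schwarz estimate ensures that the radial potential $V(|x|) = \o + h^2(|x|)/|x|^2 + \int_{|x|}^R h(s) u^2(s)/s\, ds$ is bounded on $[0,R]$, so $-\Delta u = |u|^{p-1}u - V u$ has right-hand side in $L^{q}(B(0,R))$ for every $q < \infty$. Calder\'on--Zygmund then yields $u \in W^{2,q}$, hence $u \in C^{1,\alpha}$, after which $V$ inherits H\"older regularity from $u$ and Schauder estimates give $u \in C^{2,\alpha}(\overline{B(0,R)})$. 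The only genuinely delicate step in the entire argument is the Fubini interchange in the computation of $N'(u)$; everything else is routine.
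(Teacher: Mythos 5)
Your argument is correct, but it is worth noting that the paper itself does not prove this proposition at all: it simply observes that the statement was established in \cite{byeon} for the whole-plane problem and that the same arguments carry over to the ball. What you have written is essentially a self-contained reconstruction of that delegated proof, and it is sound: the splitting $I_R=Q+N-P$, the Cauchy--Schwarz bound $h(r)^2\le \tfrac{r^2}{8}\int_0^r s u^4\,ds$ controlling the nonlocal term by $\|u\|_{L^4}^4\|u\|_{L^2}^2$, the Gateaux derivative of $N$, and the Fubini interchange producing the coefficient $\int_{|x|}^R h(s)u^2(s)s^{-1}\,ds$ all check out; your formula for $I_R'$ agrees exactly with the paper's expressions \eqref{versione 1} and \eqref{versione 2} (the paper performs the very same Fubini step to pass from one to the other). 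The one place where you are slightly too quick is the claim that ``the same Cauchy--Schwarz estimate'' bounds the third piece of the potential, $\int_{|x|}^R h(s)u^2(s)s^{-1}\,ds$. Cauchy--Schwarz does give $h(s)/s\le C\|u\|_{L^4}^2$ uniformly, but you then still need $\int_0^R u^2(s)\,ds<\infty$ \emph{without} the Jacobian weight $s$; this holds for radial $H^1_0(B(0,R))$ functions in dimension two because the radial lemma only allows a logarithmic singularity at the origin, but it deserves a line of justification rather than being absorbed into ``the same estimate''. With that small addition, the bootstrap to classical regularity goes through as you describe. Your write-up has the advantage of being explicit and verifiable within the paper; the paper's citation has the advantage of brevity and of covering the continuity of $u\mapsto N'(u)$, which you assert by analogy.
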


This was proved in \cite{byeon} for the problem set in $\R^2$;
the same arguments apply to our case. Moreover, the derivative of $I_R$ is:
\begin{align} \label{versione 1}
\frac{I_R'(z)[u]}{2\pi} &= \int_0^R (z'u' + \o z u - |z|^{p-2}z u)
r\, dr \nonumber
\\
&\quad+ \frac{1}{4}\int_0^R \frac{z(r) u(r)}{r}
\left(\int_0^r z^2(s) s\, ds \right)^2dr \nonumber \\
&\quad+ \frac{1}{2}\int_0^R \frac{z^2(r)}{r} \left(\int_0^r
z^2(t)t\, dt \right)\left(\int_0^r z(s)u(s)s\, ds \right) dr.
\end{align}

If in the last term we use Fubini Theorem in the variables $r$,
$s$, and after that we interchange the name of the variables, we
obtain:
\begin{align} \label{versione 2}
\frac{I_R'(z)[u]}{2\pi} &= \int_0^R (z'u' + \o z u - |z|^{p-2}z u)
r\, dr \nonumber
\\
&\quad+ \frac{1}{4}\int_0^R \frac{z(r) u(r)}{r}
\left(\int_0^r z^2(s) s\, ds \right)^2 dr \nonumber \\
&\quad+ \frac{1}{2}\int_0^R z(r)u(r) r \left(\int_r^R
\frac{z^2(s)}{s}\left(\int_0^s z^2(t)t\, dt
\right) ds \right) dr.
\end{align}
In this paper we will make use of both expressions for $I_R'$,
depending on the case.

\medskip Our proof of Theorem \ref{main} uses a Lyapunov-Schmidt
reduction to prove existence of a critical point for $I_R$. In order
to do that, some knowledge of the limit functional is needed.
First of all, let us give a heuristic derivation of this energy
functional. Consider $u(r)$ a fixed function, and define
$u_\rho(r)= u(r-\rho)$. Let us now estimate $I(u_\rho)$
as $\rho \to +\infty$; after the change of variables $r \to
r+\rho$, we obtain:
\begin{align*}
\frac{I (u_\rho)}{2\pi} &= \dis \frac 12 \int_{-\rho}^{+\infty}
(|u'|^2 + \o u^2)(r+\rho) \, dr
\\
& \quad+\dis \frac{1}{8} \int_{-\rho}^{\infty}
\frac{u^2(r)}{r+\rho}\left(\int_{-\rho}^r (s+\rho) u^2(s) \,
ds\right)^2 dr  - \dis \frac{1}{p+1}  \int_{-\rho}^{\infty}
|u|^{p+1}(r+\rho) \, dr.
\end{align*}

We estimate the above expression by simply replacing the
expressions $(r+\rho)$, $(s+\rho)$ with the constant $\rho$:
$$(2\pi)^{-1} I (u)  $$$$\sim  \dis \rho \left [ \frac 12 \ir (|u|'^2
+ \o u^2) \, dr
 +\dis \frac{1}{8} \ir
u^2(r) \left(\int_{-\infty}^r  u^2(s) \, ds\right)^2 dr  - \dis
\frac{1}{p+1}  \ir |u|^{p+1} \, dr \right ]$$
$$ =\rho \left [ \frac 12 \ir (|u|'^2
+ \o u^2) \, dr
 +\dis \frac{1}{24} \left ( \ir
u^2 dr \right)^3  - \dis \frac{1}{p+1}  \ir |u|^{p+1} \, dr \right
]. $$

Therefore, it is natural to consider the limit functional $J:
H^1(\R) \to \R$,

\beq \label{def J} J (u)= \frac 1 2  \ir \left ( |u'|^2 +
\omega u^2 \right ) dr + \frac{1}{24} \left ( \ir u^2 \, dr
\right)^3 - \frac{1}{p+1} \ir |u|^{p+1}\, dr. \eeq  Clearly, the
Euler-Lagrange equation of \eqref{def J} is the following limit
problem: \beq \label{limit}
 -u'' + \omega u + \frac 1 4 \left ( \ir
u^2(s)\, ds \right )^2 u = |u|^{p-1} u, \quad \hbox{in }\R. \eeq

Before going on with the study of \eqref{limit}, we need some
well-known facts concerning the problem \beq \label{wk} -w'' + k w
= w^p \quad \hbox{ in }\R,\ k>0.\eeq We denote by $w_k \in
H^1(\R)$ ($k>0$) the unique positive even solution of \eqref{wk}.
Observe that equation \eqref{wk} is integrable and the Hamiltonian
of $w_k$ is equal to $0$, that is,

\beq \label{hamiltonian}
 - \frac 1 2 |w_k'(r)|^2 + \frac k 2
w_k^2(r) - \frac{1}{p+1} w_k^{p+1}(r)=0, \ \hbox{ for all } r \in
\R. \eeq

It is also known that any positive solution of \eqref{wk} is of
the form $u(r)= w_k(r-\xi)$, for some $\xi\in \R$. Moreover, \beq
\label{scaling} w_k(r)= k^{\frac{1}{p-1}}w_1(\sqrt{k}r), \quad
\hbox{and} \quad w_1(r)= \left ( \frac{2}{p+1} \cosh^2 \left
(\frac{p-1}{2} r \right) \right)^{\frac{1}{1-p}}. \eeq
Notice that $w_k$ and $w_k'$ decay exponentially to zero at infinity.

In what follows we define \beq  \label{defm} m= \ir w_1^2\,dr.
\eeq

The following relations are also well known, and can be deduced
from \eqref{hamiltonian}: \beq \label{relations} \ir |w_1'|^2\, dr
= \frac{p-1}{p+3} m, \qquad \ir w_1^{p+1}\, dr =
\frac{2(p+1)}{p+3} m. \eeq

Let us now come back to equation \eqref{limit}. Consider $u$ a
positive solution of \eqref{limit}, and define $k= \omega + \frac
1 4 \left( \ir u^2\, dr\right )^2$. Then, $u$ is a solution of
$-u'' + k u = u^p$, so $u(r)= w_k(r-\xi)$ for some $\xi \in \R$.
By using \eqref{scaling}, we obtain:
$$ k= \omega + \frac 1 4 \left( \ir w_k^2(r)\, dr\right )^2=
\omega + \frac 1 4 k^{\frac{4}{p-1}} \left( \ir w_1^2(\sqrt{k}r)\,
dr\right )^2.$$ A change of variables leads us to the identity:

\beq \label{eq-k} k= \o + \frac 1 4 m^2 k^{\frac{5-p}{p-1}},\ k>0.
\eeq

Therefore, the existence of solutions for \eqref{limit} reduces to
the existence of solutions of the algebraic equation \eqref{eq-k}.
Moreover, we are also interested in the energy of those solutions,
as we shall see in Section 4. Those questions have been treated in
\cite[Section 3]{noi}, where the following results were obtained:

\begin{proposition} \label{explicit} Assume $p \in (1,3)$ and take $m$ as in \eqref{defm}.
Define:
\begin{align*}
\omega_0&= \frac{3-p}{3+p}\ 3^{\frac{p-1}{2(3-p)}}\
2^{\frac{2}{3-p}} \left(
\frac{m^2(3+p)}{p-1}\right)^{-\frac{p-1}{2(3-p)}}, 
\\
\omega_1&= \left( \frac{(5-p)m^2}{4(p-1)}
\right )^{-\frac{p-1}{2(3-p)}} - \frac{m^2}{4} \left(
\frac{(5-p)m^2}{4(p-1)}  \right )^{-\frac{(5-p)}{2(3-p)}}.
\end{align*}
%
%
%

The following holds:
\begin{enumerate}

\item $0<\o_0 < \o_1$;

\item if $\o > \o_1$, equation \eqref{eq-k} has no solution and
there is no nontrivial solution of \eqref{limit};

\item if $\o =\o_1$, equation \eqref{eq-k} has only one solution
$k_0$ and $w_{k_0}(r)$ is the only positive solution of
\eqref{limit} (apart from translations); \item if $\o \in (0,
\o_1)$, equation \eqref{eq-k} has two solutions $k_1(\o)<k_2(\o)$
and $w_{k_1}(r),w_{k_2}(r)$ are the only two positive solutions of
\eqref{limit} (apart from translations);

\item for any $\o \in (0,\o_1)$, $J(w_{k_1}) > 0$. \item
$J(w_{k_2})<0$ if and only if $\o \in (0,\o_0)$. In such case,
$w_{k_2}$ is a global minimizer for $J$.

\end{enumerate}

\end{proposition}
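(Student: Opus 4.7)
The plan is to reduce everything to the algebraic equation \eqref{eq-k} via the scaling family $w_k$, and then to study the sign and minimization properties of $J$ along that family.

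For items (1)--(4), set $\alpha := (5-p)/(p-1) > 1$ (since $p \in (1,3)$) and consider $g(k) := k - \omega - \frac{m^2}{4} k^\alpha$ on $(0,\infty)$. Since $g$ is strictly concave with $g(0) = -\omega < 0$ and $g(k) \to -\infty$, it has at most two positive zeros, and a unique maximizer $k^* = (m^2\alpha/4)^{-1/(\alpha-1)}$ at which $g(k^*) = \frac{\alpha-1}{\alpha}\,k^* - \omega$. Setting $\omega_1 := \frac{\alpha-1}{\alpha}\,k^*$ and simplifying recovers the stated closed form; items (2)--(4) then follow by inspecting the sign of $g(k^*)$, with $k_1(\omega) < k^* < k_2(\omega)$ when $\omega < \omega_1$. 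The bijection with positive solutions of \eqref{limit} is standard: if $u > 0$ solves \eqref{limit}, let $k := \omega + \frac{1}{4}(\int u^2)^2$; then $-u'' + ku = u^p$, so $u(\cdot) = w_k(\cdot - \xi)$, and substituting the scaling $w_k(r) = k^{1/(p-1)} w_1(\sqrt k\, r)$ gives $(\int w_k^2)^2 = m^2 k^\alpha$, i.e., \eqref{eq-k}.

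Next I would compute $J(w_k)$ explicitly. Testing the equation $-w_k'' + k w_k = w_k^p$ against $w_k$ gives $\int (w_k')^2 + k\int w_k^2 = \int w_k^{p+1}$; combining with \eqref{relations} and the scaling yields
\[
J(w_k) \;=\; \frac{(p-1)m}{p+3}\, k^{\frac{p+3}{2(p-1)}} \;-\; \frac{m^3}{12}\, k^{\frac{3(5-p)}{2(p-1)}}.
\]
For $p \in (1,3)$ the second exponent strictly exceeds the first, so there is a unique $k^{**}$ with $J(w_{k^{**}}) = 0$, and $J(w_k) < 0 \Leftrightarrow k > k^{**}$. Direct inspection of the three numbers $k^*, k^{**}, k_2(0) := (m^2/4)^{-1/(\alpha-1)}$ (all raised to the common exponent $(p-1)/(2(3-p))$, so one compares bases) shows $k^* < k^{**} < k_2(0)$ precisely when $p < 3$. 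As $\omega$ runs through $(0,\omega_1)$, $k_2(\omega)$ is continuous and strictly decreases from $k_2(0)$ to $k^*$, while $k_1(\omega) \in (0,k^*) \subset (0,k^{**})$; thus (5) is immediate, and $J(w_{k_2(\omega)}) < 0 \Leftrightarrow k_2(\omega) > k^{**} \Leftrightarrow \omega < \omega_0$, with $\omega_0 := k^{**} - \frac{m^2}{4}(k^{**})^\alpha$. A short manipulation using $(k^{**})^{2(3-p)/(p-1)} = 12(p-1)/((p+3)m^2)$ reduces $\omega_0$ to the stated closed form, and $\omega_0 < \omega_1$ follows from $k^{**} > k^*$ combined with the monotonicity of $k \mapsto k - \frac{m^2}{4}k^\alpha$ on $(k^*,\infty)$, completing item (1).

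The remaining, and most delicate, claim is that $w_{k_2}$ realizes $\inf_{H^1(\R)} J$ whenever $\omega \in (0,\omega_0)$. Coercivity of $J$ on $H^1(\R)$ follows from the 1D Gagliardo--Nirenberg bound $\int |u|^{p+1} \le C\|u'\|_2^{(p-1)/2}\|u\|_2^{(p+3)/2}$ and Young's inequality; the condition $p < 3$ is precisely what allows the $(p+1)$-power term to be absorbed into the positive pieces $\tfrac12\|u'\|_2^2$ and $\tfrac{1}{24}\|u\|_2^6$. To cope with translation invariance I would Schwarz-symmetrize any minimizing sequence $u_n \mapsto u_n^*$: P\'olya--Szeg\H{o} gives $J(u_n^*) \le J(u_n)$, while $\int u^2$ and $\int |u|^{p+1}$ are preserved. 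Symmetric decreasing functions bounded in $H^1(\R)$ satisfy the uniform pointwise decay $|u_n^*(r)| \le \|u_n^*\|_2/\sqrt{2|r|}$, which gives compactness in every $L^q(\R)$ with $q > 2$ (in particular in $L^{p+1}$); combined with the weak lower semicontinuity of the remaining three terms of $J$, a weak subsequential limit $u_\infty$ satisfies $J(u_\infty) \le \liminf J(u_n^*) = \inf J < 0 = J(0)$, so $u_\infty \not\equiv 0$ and is a minimizer. Being a positive critical point of $J$, $u_\infty$ must equal (up to translation) $w_{k_1}$ or $w_{k_2}$; its negative energy forces $u_\infty = w_{k_2}$. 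The main obstacle is precisely this minimization step: translation non-compactness together with the nonlocal cubic term prevents a straightforward weak-compactness argument, and both are tamed by the symmetric-decreasing reduction and the strict negativity of $\inf J$.
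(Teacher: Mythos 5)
Your argument is correct, but be aware that the paper does not actually prove Proposition \ref{explicit}: it imports it wholesale from \cite[Section 3]{noi} (the sentence immediately preceding the statement), so there is no in-paper proof to compare against. Your reconstruction is the natural one and, as far as I can check, complete: the reduction of items (1)--(4) to the strict concavity of $g(k)=k-\omega-\frac{m^2}{4}k^{\alpha}$ with $\alpha=(5-p)/(p-1)>1$, the identification $\omega_1=\frac{\alpha-1}{\alpha}k^*$, and the closed form for $\omega_0$ via $(k^{**})^{2(3-p)/(p-1)}=12(p-1)/((p+3)m^2)$ all reproduce the stated constants exactly. Two presentational points deserve attention. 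First, your displayed formula for $J(w_k)$ is valid only along the solution branch: besides the identity obtained by testing against $w_k$ and the relations \eqref{relations}, it requires eliminating $\omega$ through \eqref{eq-k}, i.e.\ $\omega=k-\frac{m^2}{4}k^{\alpha}$; without that substitution $J(w_k)$ still carries the term $\frac{\omega}{2}\int w_k^2$. Since you only evaluate at $k=k_1,k_2$, this is harmless, but it should be said explicitly. Second, in the minimization step the compactness of symmetric-decreasing, $H^1(\R)$-bounded sequences in $L^{p+1}(\R)$ merits a full sentence: the uniform tail bound $|u_n^*(r)|\le \|u_n^*\|_{L^2}/\sqrt{2|r|}$ combined with Rellich on compact intervals is exactly the right mechanism, and the monotonicity is essential because the Strauss radial lemma fails in dimension one without it. With these remarks, the treatment of item (6) --- coercivity from the one-dimensional Gagliardo--Nirenberg inequality (where $p<3$ enters through $2(p+3)/(5-p)<6$), symmetrization, weak lower semicontinuity of the quadratic and sextic terms, strong convergence of the $(p+1)$-term, and the classification from item (4) together with $J(w_{k_1})>0$ to identify the minimizer as $w_{k_2}$ --- is sound and self-contained, which is arguably more than the paper itself offers the reader.
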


\begin{remark} In general, we cannot obtain a more explicit expression of $m$ depending on
$p$, but it can be easily approximated by using some software. In
Figure 1 the maps $\o_0(p)$ and $\o_1(p)$ have been plotted.

\begin{figure}[h]
\centering \fbox{
\begin{minipage}[c]{110mm}
           \centering
        \resizebox{99mm}{66mm}{\includegraphics{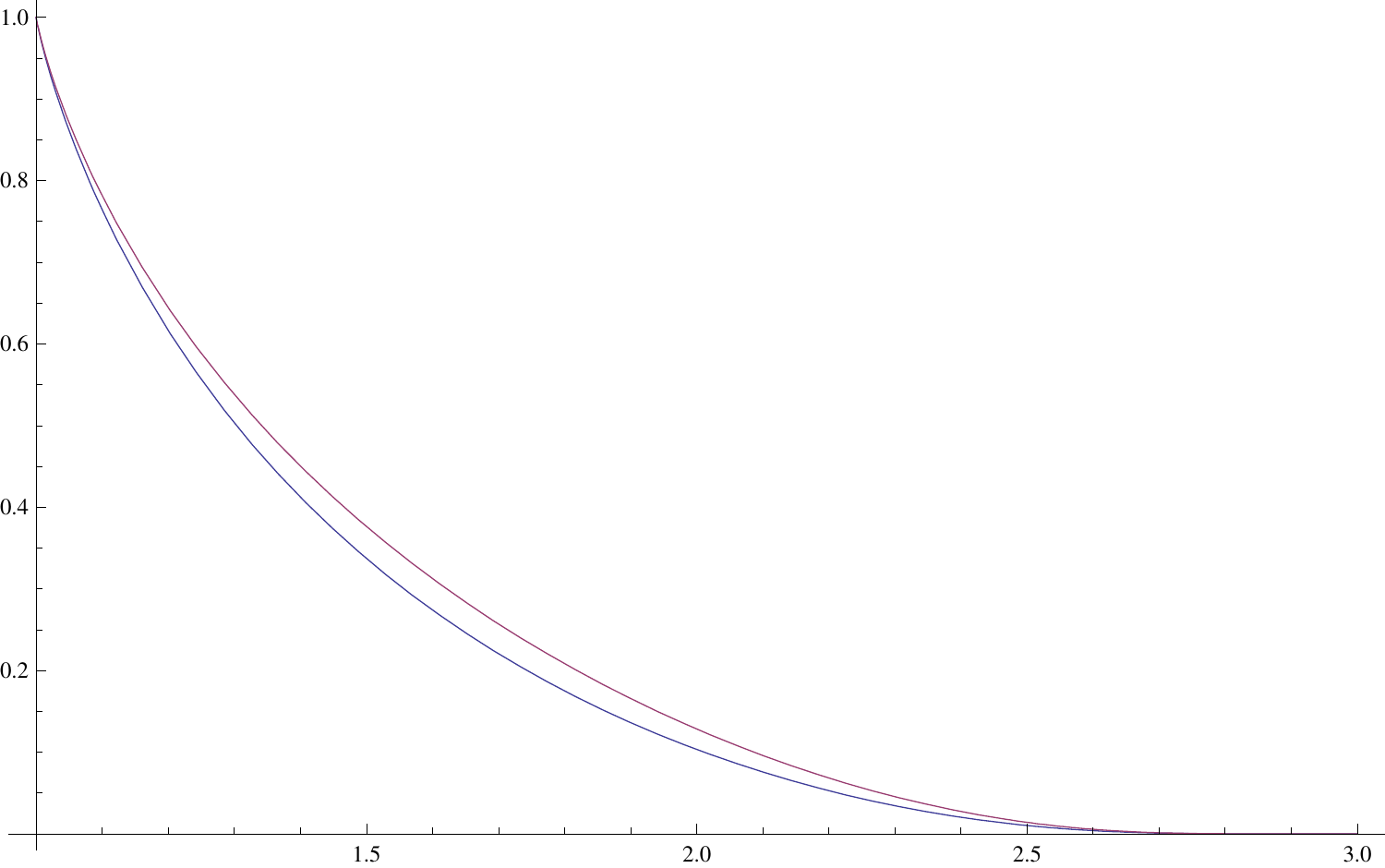}}
       \caption{\footnotesize The values $\o_0(p) <\o_1(p)$, for $p\in (1,3)$.}

         \end{minipage}
       }
\end{figure}

For some specific values of $p$, $m$ can be explicitly computed,
and hence $\o_0$ and $\o_1$. For instance, if $p=2$, $m=6$,
$\o_1=\frac{2}{9\sqrt{3}}$ and $\o_0=\frac{2}{5\sqrt{15}}$.

\end{remark}

As commented in the introduction, in Section 3 we will use the
above solutions as a limit profile of solutions of
\eqref{equation0} via a Lyapunov-Schmidt reduction. On that
purpose, the non-degeneracy of the limit solutions is required. We
say that a solution $u$ of \eqref{limit} is degenerate if  the
problem:
%
\begin{multline}
 -\phi''+ \omega \phi  +
\frac 1 4 \left( \ir u^2(s)\, ds \right)^2 \phi
\\ +
\left( \ir u^2(s)\, ds \right)\left( \dis \ir u(s)
\phi(s) \, ds \right) u -pu^{p-1}\phi=0 \label{ND}
\end{multline}
has a solution $\phi \in H^1(\R)$ different from $c\, u'(r)$, $c
\in \R$.

Next proposition is devoted to this question.

\begin{proposition} \label{nd0}
Let $\o_1$, $k_0$, $k_1$, $k_2$ as in Proposition \ref{explicit}.
Then:
\begin{enumerate}

\item If $\o=\o_1$, $w_{k_0}$ is a degenerate solution of
\eqref{limit}.

\item If $\o \in (0, \o_1)$, both $w_{k_1}$ and $w_{k_2}$ are
non-degenerate solutions of \eqref{limit}.

\end{enumerate}

\end{proposition}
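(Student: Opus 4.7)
The plan is to reduce \eqref{ND} to an algebraic condition on the roots $k$ of \eqref{eq-k} via the Fredholm alternative together with the scaling identity \eqref{scaling}. Setting $u = w_k$ and using \eqref{eq-k} to replace $\omega + \tfrac14 N(k)^2$ by $k$, where $N(k) := \int_\R w_k^2 = m\,k^{(5-p)/(2(p-1))}$ follows from \eqref{scaling}, equation \eqref{ND} collapses to
\[
L_0 \phi = -\alpha\, w_k, \qquad L_0 := -\partial_r^2 + k - p\,w_k^{p-1},\quad \alpha := N(k)\int_\R w_k\,\phi.
\]
Standard Sturm--Liouville theory yields $\ker L_0 \cap H^1(\R) = \mathrm{span}(w_k')$ (the second independent solution of $L_0 u=0$ grows exponentially), and the Fredholm solvability condition $\int_\R w_k\, w_k' = \tfrac12\int_\R (w_k^2)' = 0$ is automatic.

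A particular solution of $L_0 \psi = w_k$ is found by differentiating $-w_k'' + k w_k = w_k^p$ with respect to the parameter $k$: setting $\psi_k := -\partial_k w_k$, one obtains $L_0 \psi_k = w_k$, and $\psi_k \in H^1(\R)$ via the explicit formula \eqref{scaling} and the exponential decay of $w_1, w_1'$. Therefore every $H^1$-solution of \eqref{ND} takes the form $\phi = -\alpha\,\psi_k + c\,w_k'$ for some $c \in \R$. Note that $\psi_k$ is even while $w_k'$ is odd, so any $\alpha \neq 0$ produces a genuinely new kernel element outside $\mathrm{span}(w_k')$.

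The self-consistency relation for $\alpha$ is obtained by substituting $\phi$ back into its definition. Using $\int w_k w_k' = 0$ and
\[
\int_\R w_k\,\psi_k \;=\; -\tfrac12\,\partial_k \!\int_\R w_k^2 \;=\; -\tfrac12\,N'(k),
\]
one gets
\[
\alpha \;=\; -\alpha\,N(k)\!\int_\R w_k\,\psi_k \;=\; \tfrac12\,\alpha\,N(k)N'(k) \;=\; \tfrac14\,\alpha\,(N^2)'(k).
\]
So either $\alpha = 0$, in which case $\phi = c\,w_k'$ is a pure translation, or $(N^2)'(k) = 4$.

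To conclude, rewrite \eqref{eq-k} as $g(k) = 0$ with $g(k) := k - \omega - \tfrac14 N(k)^2$; then $g'(k) = 1 - \tfrac14(N^2)'(k)$, and the degeneracy condition $(N^2)'(k) = 4$ is exactly the tangency $g'(k) = 0$. By Proposition \ref{explicit}, this tangency holds only at the double root $k_0$ when $\omega = \omega_1$, yielding (1); for $\omega \in (0,\omega_1)$ the roots $k_1, k_2$ are simple (transversal crossings of $g$), so $g'(k_i) \neq 0$, which forces $\alpha = 0$ and gives (2). I expect the main conceptual step to be identifying $\psi_k = -\partial_k w_k$ as the right particular solution; once that is in place, the whole argument reduces to comparing two one-variable derivatives evaluated at roots of \eqref{eq-k}.
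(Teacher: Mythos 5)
Your argument is correct and lands on exactly the same reduction as the paper: both proofs use \eqref{eq-k} to rewrite \eqref{ND} as $L_0\phi=-\a w_k$ with $L_0=-\partial_r^2+k-pw_k^{p-1}$ and $\a$ the nonlocal scalar, then determine the general $H^1$ solution and close the loop with the self-consistency relation for $\a$. Where you differ is in how the particular solution is produced: the paper makes the reduction-of-order ansatz $\phi=\psi\,w_k'$, solves the resulting first-order equation for $\psi'$ by variation of constants, and integrates explicitly using the zero-Hamiltonian identity \eqref{hamiltonian}, arriving at $\phi=\frac{\a}{2k}\bigl(\frac{2}{p-1}w_k+rw_k'\bigr)+\b w_k'$; you instead differentiate the soliton equation \eqref{wk} in $k$ to get $L_0(-\partial_k w_k)=w_k$ directly. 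These are literally the same object, since \eqref{scaling} gives $\partial_k w_k=\frac{1}{2k}\bigl(\frac{2}{p-1}w_k+rw_k'\bigr)$, but your route is shorter, avoids the Hamiltonian computation, and makes the final dichotomy transparent: $\int_\R w_k\,\partial_k w_k=\frac12 N'(k)$ turns the consistency relation into $(N^2)'(k)=4$, i.e.\ $g'(k)=0$ for $g(k)=k-\o-\frac14N(k)^2$, which is precisely the tangency at the double root. The only point you should make explicit (the paper is equally terse here) is why $g'(k_i)\neq 0$ at the two roots when $\o\in(0,\o_1)$ and why $g'(k_0)=0$ at the unique root when $\o=\o_1$: since $\frac{5-p}{p-1}>1$ for $p\in(1,3)$, $g$ is strictly concave with $g(0^+)<0$ and $g(k)\to-\infty$, so two distinct roots are necessarily transversal while a unique root is a tangency; this one-line concavity remark also confirms that $\phi=\partial_k w_{k_0}$ (even, hence independent of the odd $w_{k_0}'$) is a genuine degenerate direction in case (1).
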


\begin{proof}

Take $w_k$ a solution of \eqref{limit}, and $\phi$ a solution of
\eqref{ND}. Define $$\a= \left( \ir w_k^2(s)\, ds \right)\left( \ir
w_k(s) \phi(s) \, ds \right)\in \R.$$ Then, $\phi$ is a solution of
the problem:
$$-\phi''+ \omega \phi  + \frac 1 4 \left( \ir w_k^2(s)\, ds
\right)^2 \phi -pw_k^{p-1}\phi=-\a w_k. $$

Write $\phi(r)= \psi(r) w_k'(r)$: then,
$$ \psi''(r) =- 2 \psi'(r) \frac{w_k''(r)}{w_k'(r)} + \a \frac{w_k(r)}{w_k'(r)}.  $$
This is a linear equation of first order in $\psi'(r)$, which can be
solved by variation of constants:
$$\psi'(r)= \frac{C}{(w_k'(r))^2} + \frac{\a}{2}
\frac{w_k^2(r)}{(w_k'(r))^2},\ C \in \R.$$

Observe that if $C\neq 0$, then $\phi(r)= \psi(r) w_k'(r)$ tends to
$+\infty$ at infinity, which is not possible if $\phi \in
H^1(\R)$. So, $C$ must be equal to $0$, and we are led to:
$$\psi'(r)= \frac{\a}{2} \frac{w_k^2(r)}{(w_k'(r))^2}.$$

By using \eqref{hamiltonian}, the above equation can be explicitly
integrated:  $$\psi(r)= \frac{\a}{2k} \left ( \frac{2}{(p-1)}
\frac{w_k(r)}{w_k'(r)} +  r \right) +\beta, \quad \hbox{ with }\b\in \R.$$
Therefore,
$$\phi(r)= \frac{\a}{2k} \left ( \frac{2}{p-1} w_k(r) +  r w_k'(r)\right) + \beta w_k'(r).$$

Clearly, $w_k$ is degenerate if and only if $\a$ can take values
different from zero. Take $\b=0$, and recall now the definition of
$\a$:
$$ \a = \frac{\a}{2k} \left( \ir w_k^2(s)\, ds \right)\left(
\ir \frac{2}{p-1} w_k^2(s) +  s w_k'(s) w_k(s) \, ds \right).$$

By integration by parts,
$$ \ir s w_k'(s) w_k(s) \, ds = -\frac{1}{2} \ir w_k^2(s)\, ds = -\frac{1}{2} k^{\frac{5-p}{2(p-1)}} m. $$

Then, either $\a = 0$ or
$$1= m^2 k^{\frac{6-2p}{p-1}}\frac{5-p}{4(p-1)}.$$
This, together with \eqref{eq-k}, implies that $\o= \o_1$ and $k=k_0$.

\end{proof}

Observe that, due to the nonlocal character of our problem, the
linearized problem \eqref{ND} does not have the form of a
Schr\"{o}dinger operator. Then, some more work is needed in order to
accomplish a Lyapunov-Schmidt reduction. Let us define the
following operators:
\begin{align*}
T[\phi]&= -\phi'' + \left [ \omega + \frac 1 4 \left (\ir u^2(s) \, ds \right )^2 - p u^{p-1}\right ] \phi,  \nonumber
\\
K[\phi]&= \left (\ir u^2(s) \, ds \right ) \left (\ir u(s)\phi(s) \, ds
\right )u , \nonumber
\\
L[\phi] &= T[\phi] + K[\phi].
\end{align*}

Observe that $L$, $T$, $K:H^1(\R) \cap \{u'\}^{\perp} \to
H^{-1}(\R)\cap \{u'\}^\perp.$ Of course, here orthogonality is
understood for the scalar product $\langle \cdot
\rangle_{H^1(\R)}$.

\begin{corollary} \label{nd1} Assume $\o \in (0,\o_1)$. Then the operator $L$ defined above is a
bijection for $u=w_{k_1}$ or $u=w_{k_2}$. Moreover, if $\o < \o_0$
and $u= w_{k_2}$, there exists $c>0$ so that:

\beq \label{minimoND} \langle L[\phi], \phi \rangle_{H^1(\R)} \ge c \|
\phi\|_{H^1(\R)}^2 \ \mbox{ for any } \phi \in H^1(\R) \cap
\{u'\}^{\perp}.\eeq

\end{corollary}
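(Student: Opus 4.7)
The plan is to exploit that $L$ is a compact perturbation of a self-adjoint isomorphism, combined with the non-degeneracy from Proposition \ref{nd0}. Writing $L = T + K$ and viewing both as maps $H^1(\R)\to H^{-1}(\R)$, the leading part $-\partial_r^2 + c_0$ with $c_0=\omega + \tfrac14(\ir u^2)^2>0$ is an isomorphism $H^1\to H^{-1}$, multiplication by the exponentially decaying function $pu^{p-1}$ is compact from $H^1$ into $L^2\hookrightarrow H^{-1}$ by Rellich--Kondrachov together with decay at infinity, and $K$ is of rank one. Hence $L$ is Fredholm of index zero. A direct check shows $L$ is symmetric in the $L^2$ pairing, and differentiating the equation satisfied by $u$ gives $L[u']=0$; hence $\ir (L\phi)\, u' = \ir \phi\, L[u']=0$ for every $\phi\in H^1$, so $L$ indeed sends $H^1\cap\{u'\}^\perp$ into $H^{-1}\cap\{u'\}^\perp$.

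For the first assertion, Proposition \ref{nd0}(2) gives $\ker L=\mathrm{span}\{u'\}$ for both $u=w_{k_1}$ and $u=w_{k_2}$ when $\omega\in(0,\omega_1)$, so $L$ is injective on $\{u'\}^\perp$. By Fredholm index zero together with the symmetry of $L$, the range of $L:H^1\to H^{-1}$ coincides with the annihilator $\{u'\}^\perp$; hence the restriction $L:H^1\cap\{u'\}^\perp \to H^{-1}\cap\{u'\}^\perp$ is a bijection.

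For the coercivity estimate \eqref{minimoND} with $u=w_{k_2}$ and $\omega<\omega_0$, I first identify $\langle L\phi,\phi\rangle_{L^2}$ with the second variation $J''(w_{k_2})[\phi,\phi]$. Proposition \ref{explicit}(6) says $w_{k_2}$ is a global minimizer of $J$, so $\langle L\phi,\phi\rangle_{L^2}\ge 0$ on all of $H^1(\R)$. Regarding $L$ as an unbounded self-adjoint operator on $L^2(\R)$ with form domain $H^1$, Weyl's theorem gives $\sigma_{\mathrm{ess}}(L)=\sigma_{\mathrm{ess}}(T)=[k_2,\infty)$, since the compact $K$ and the exponentially decaying potential $pu^{p-1}$ do not affect the essential spectrum, and because $c_0=k_2$ by the defining relation \eqref{eq-k} for $u=w_{k_2}$. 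Below $k_2$ the spectrum is discrete. Let $\lambda_1=\inf\{\langle L\phi,\phi\rangle_{L^2}:\phi\in\{u'\}^\perp,\ \|\phi\|_{L^2}=1\}$; then $\lambda_1\ge 0$, and were $\lambda_1=0$, it would be attained (being strictly below the bottom $k_2$ of $\sigma_{\mathrm{ess}}(L)$) by a unit eigenfunction $\phi_*\in\{u'\}^\perp$ with $L\phi_*=0$, contradicting Proposition \ref{nd0}(2). Therefore $\lambda_1>0$.

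Finally, combining the strict lower bound $\langle L\phi,\phi\rangle\ge\lambda_1\|\phi\|_{L^2}^2$ on $\{u'\}^\perp$ with the elementary estimate $\langle L\phi,\phi\rangle\ge\|\phi'\|_{L^2}^2 - C\|\phi\|_{L^2}^2$ (obtained by discarding the nonnegative nonlocal term and bounding $u^{p-1}\in L^\infty$) via a convex combination with small weight on the first inequality upgrades the $L^2$-coercivity to the desired $\langle L\phi,\phi\rangle\ge c\|\phi\|_{H^1}^2$. The main obstacle I foresee is obtaining the strict positivity $\lambda_1>0$ rather than merely $\lambda_1\ge 0$: non-degeneracy excludes only the zero eigenvalue, so it is crucial that the global-minimality from Proposition \ref{explicit}(6) forces $L\ge 0$, while the positive gap between $0$ and the essential spectrum $[k_2,\infty)$ is what ultimately produces uniform $H^1$-coercivity.
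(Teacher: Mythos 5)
Your proposal is correct and follows essentially the same route as the paper: the bijection is obtained from the Fredholm alternative applied to $L=T+K$ (rank-one plus decaying-potential perturbation of an isomorphism) together with the injectivity from Proposition \ref{nd0}, and the coercivity \eqref{minimoND} comes from combining the global minimality of $w_{k_2}$ in Proposition \ref{explicit} with non-degeneracy, which is exactly what the paper invokes in its (much terser) two-line argument. Your spectral-gap expansion of that last step is a legitimate filling-in; the only point deserving one extra line is that the attainment argument for $\lambda_1$ below the essential spectrum is cleanest on the $L^2$-orthogonal complement of $u'$ (which is $L$-invariant), after which one transfers the bound to the $H^1$-orthogonality used in the statement, or alternatively one notes that since $L\ge 0$ globally any minimizer of the form with value $0$ must lie in $\ker L=\operatorname{span}\{u'\}$ regardless of the constraint.
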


\begin{proof}

It is well-known that $T$ is a bijection. Since the image of $K$
is of dimension 1, $K$ is obviously a compact operator. Moreover,
by Proposition \ref{nd0}, $L$ is injective. By Fredholm
alternative, $L$ is a bijection.

Moreover, by Proposition \ref{explicit}, we know that $w_{k_2}$ is
a minimizer of $J$ with non-degenerate second derivative. From
this, \eqref{minimoND} follows.

\end{proof}

\section{The auxiliary equation}

In this section we begin the proof of Theorem \ref{main}, which
uses a singular perturbation argument based on a Lyapunov-Schmidt
reduction. So, let us assume $p\in (1,3)$ and $\o \in (0, \o_0)$
throughout the rest of the paper.

The construction of our approximating solution is largely inspired
by \cite{amni}. We set $U=w_{k_2}$, where $k_2$ is the second root of equation \eqref{eq-k} (see Proposition \ref{explicit}). We will denote $U_\rho=U(\cdot -\rho)$, and $V_\rho=U_\rho (R)e^{\sqrt{k_2}(\cdot-R)}$. For later use, observe
that $U_\rho$ solves the equation \beq\label{urho} -U_\rho''+\o
U_\rho +\frac 14 \left( \ir U^2(s) \ ds\right)^2 U_\rho =U_\rho^p,
\quad \hbox{in }\R, \eeq whereas $V_\rho$ satisfies
\beq\label{vrho} -V_\rho''+\o V_\rho +\frac 14 \left( \ir U^2(s) \
ds\right)^2 V_\rho =0, \quad \hbox{in }\R. \eeq We define our
approximating solution as $z_\rho=\phi_R (U_\rho-V_\rho)\in H_R$,
where $\phi_R:[0,+\infty)\to [0,1]$ is a smooth function such that
\[
\phi_R(r)
=\left\{
\begin{array}{ll}
0&\hbox{if }r\in (0,R/4),
\\
1&\hbox{if }r\in (R/2,R),
\end{array}
\right., \quad |\phi_R'(r)|\le \frac C R.
\]

Let us define the interval \[
\I_R=\left[R-\frac{\b}{2\sqrt{k_2}}\log R, \
R-\frac{\a}{2\sqrt{k_2}}\log R \right], \] where $\a$, $\b$ are
fixed constants satisfying:
\beq \label{alphabeta} \max \left \{\frac 1 2,\ \frac 1 p \right
\} < \a < 1,\ \ 1 < \b < \min\{2,\ p\} \a. \eeq

We are interested in finding critical points for $I$ near the
manifold of approximating solutions:
$$Z= \{ z_{\rho}, \ \rho \in \I_R \}$$

For any $z_\rho \in Z$, consider $T_{z_\rho} Z$ the tangent
space, spanned by the function:
\beq \label{zpunto} \dot{z}_\rho:= \frac{\partial z_\rho}{\partial
\rho}= \phi_R \left (-U'_\rho +  U'_\rho
(R)e^{\sqrt{k_2}(\cdot-R)}\right). \eeq

Take also $W=W_{z_\rho }= T_{z_\rho }Z^{\perp}$, and let $P, Q$ denote the
orthogonal projections onto the spaces $W$ and $T_{z_\rho }Z$,
respectively. We then decompose the equation:
$$I_R'(z_\rho +w)=0 \Leftrightarrow \left \{ \begin{array}{cl} P I_R'(z_\rho +w)=0 & \mbox{ (auxiliary equation)},
\\QI_R'(z_\rho +w)=0 & \mbox{ (bifurcation equation)}. \end{array} \right. $$

In this section we use the Banach contraction principle to find a solution
$w_\rho$ of the auxiliary equation for any $\rho \in \I_R$.
In order to do that, some estimates are in order.

Our first result is an estimate on $I_R'(z_\rho)$, which measures in
which sense $z_\rho$ is an approximating solution.

\begin{lemma}\label{pr:I'}
For any $\rho\in \I_R$, we have that
\[
\|I_R'(z_\rho)\| =\left\{
\begin{array}{ll}
O\left(\sqrt{\rho}\,|R-\rho|\,e^{-2\sqrt{k_2}(R-\rho)}\right), &
\hbox{if }p\ge 2,
\\
O\left(\sqrt{\rho}\,e^{-p\sqrt{k_2}(R-\rho)}\right), & \hbox{if
}p<2.
\end{array}
\right.
\]
\end{lemma}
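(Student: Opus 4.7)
The plan is to bound $\|I_R'(z_\rho)\|$ by duality: for arbitrary $u\in H_R$ with $\|u\|\le 1$, I would estimate $|I_R'(z_\rho)[u]|$. From the weak formulation \eqref{versione 2}, integrating by parts the Dirichlet term (no boundary contribution at $r=R$, since $u(R)=0$) yields
\[
\frac{I_R'(z_\rho)[u]}{2\pi} = \int_0^R \mathcal{E}_\rho(r)\, u(r)\, r\, dr,\qquad \mathcal{E}_\rho := -\Delta z_\rho + (\omega + A[z_\rho])\, z_\rho - z_\rho^p,
\]
where $A[z_\rho](r) = h^2(r)/r^2 + \int_r^R (h(s)/s)\, z_\rho^2(s)\,ds$ is the nonlocal coefficient of \eqref{equation0}. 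Cauchy--Schwarz, together with $\|u\|_{L^2(B(0,R))}\le \|u\|$, reduces the task to estimating $\|\mathcal{E}_\rho\|_{L^2(r\,dr)}$.

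Substituting $z_\rho=\phi_R(U_\rho-V_\rho)$ and using \eqref{urho}--\eqref{vrho} to cancel the leading part, $\mathcal{E}_\rho$ splits into four error pieces: \emph{(i)} cutoff terms from $\phi_R'$ and $\phi_R''$, supported on $(R/4,R/2)$; \emph{(ii)} the radial first-order correction $-z_\rho'/r$, absent from the one-dimensional equations \eqref{urho}--\eqref{vrho}; \emph{(iii)} the nonlinearity discrepancy $\phi_R U_\rho^p - \phi_R^p(U_\rho-V_\rho)^p$; and \emph{(iv)} the nonlocal coefficient discrepancy $\left[A[z_\rho] - \tfrac14(\int U^2)^2\right]\, z_\rho$.

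I would show that pieces \emph{(i)}, \emph{(ii)}, \emph{(iv)} are subdominant. Piece \emph{(i)} is $O(e^{-cR})$, since $U_\rho$ and $U_\rho'$ at $r\le R/2$ decay like $e^{-\sqrt{k_2}(\rho - R/2)}\sim e^{-\sqrt{k_2}R/2}$. Piece \emph{(ii)}, by Cauchy--Schwarz with the weight $r\,dr$, is bounded by $\bigl(\int|z_\rho'|^2/r\,dr\bigr)^{1/2}\|u\|_{L^2(r\,dr)} = O(\rho^{-1/2})\|u\|$, since $z_\rho'\sim U_\rho'$ is concentrated at $r\sim\rho$. For piece \emph{(iv)}, a Taylor expansion gives $h(r)\approx(\rho/2)\int_{-\infty}^{r-\rho} U^2$ on the support of $z_\rho$; integrating by parts in $\int_r^R(h/s)z_\rho^2\,ds$ via $\partial_s h(s) = (s/2) z_\rho^2(s)$ makes the leading parts of $h^2/r^2$ and $\int_r^R(h/s)z_\rho^2\,ds$ telescope to $\tfrac14(\int U^2)^2$, leaving an error $O(|r-\rho|/\rho) = O(\log R/R)$ uniform in $r$, again dominated by the stated bound.

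The dominant contribution, and the source of the $p$-dichotomy, is piece \emph{(iii)}. On $\{R/2<r<R\}$ where $\phi_R=1$, this equals $U_\rho^p - (U_\rho-V_\rho)^p$. For $p\ge 2$, the mean value inequality yields $|U_\rho^p-(U_\rho-V_\rho)^p|\le C(U_\rho^{p-1}V_\rho + V_\rho^p)$; changing variable $s=r-\rho$ and using $U(s)\sim e^{-\sqrt{k_2}|s|}$, $V_\rho(r)=U_\rho(R)e^{-\sqrt{k_2}(R-r)}$, the dominant integral $\int U_\rho^{2(p-1)}V_\rho^2\,r\,dr$ reduces to $\rho\,U_\rho(R)^2 e^{-2\sqrt{k_2}(R-\rho)} \int_0^{R-\rho}e^{-2(p-2)\sqrt{k_2}s}\,ds$, which for $p\ge 2$ is bounded by $\rho(R-\rho)e^{-4\sqrt{k_2}(R-\rho)}$; its square root fits within the stated $\sqrt{\rho}|R-\rho|e^{-2\sqrt{k_2}(R-\rho)}$. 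For $1<p<2$, the same integrand grows exponentially in $s>0$ and evaluation at $s=R-\rho$ yields $\int V_\rho^{2p}\,r\,dr \sim \rho\, e^{-2p\sqrt{k_2}(R-\rho)}$, producing the stated $\sqrt{\rho}\,e^{-p\sqrt{k_2}(R-\rho)}$. The main obstacle I anticipate is the bookkeeping of exponential factors and integration ranges: ensuring the $|R-\rho|$ factor appears only for $p\ge 2$ and that pieces \emph{(i)}, \emph{(ii)}, \emph{(iv)} remain subdominant across the full range $p\in(1,3)$.
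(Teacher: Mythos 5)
Your overall strategy --- pass to the strong-form residual, cancel the leading part via \eqref{urho}--\eqref{vrho}, and isolate the nonlinearity discrepancy as a source of the $p$-dichotomy --- is essentially the paper's, and your pieces \emph{(i)}--\emph{(iii)} match the paper's estimates (your piece \emph{(iii)} is the paper's term $(E)$, with the same $p<2$ versus $p\ge 2$ split). The genuine gap is in piece \emph{(iv)}. Writing $z_\rho=\phi_R(U_\rho-V_\rho)$, the discrepancy between $\frac14\bigl(\int_0^R z_\rho^2\bigr)^2$ (after the harmless replacement of the weights $r,s,t$ by $\rho$) and $\frac14\bigl(\ir U^2\bigr)^2$ contains, besides exponentially small tail and $\int V_\rho^2$ terms, the cross term $-2\int U_\rho V_\rho$. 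Since $U_\rho(r)\sim e^{-\sqrt{k_2}\,|r-\rho|}$ and $V_\rho(r)=U_\rho(R)e^{\sqrt{k_2}(r-R)}$ decay at the \emph{same} rate, their product is essentially constant on $[\rho,R]$, so $\int U_\rho V_\rho = O\bigl(|R-\rho|\,e^{-2\sqrt{k_2}(R-\rho)}\bigr)$ (this is \cite[Lemma 3.7]{ACR}, the paper's term $(C)$). Its contribution to $\|I_R'(z_\rho)\|$ is $O\bigl(\sqrt{\rho}\,|R-\rho|\,e^{-2\sqrt{k_2}(R-\rho)}\bigr)$: it is the \emph{dominant} term for $p\ge 2$ and the only source of the full $|R-\rho|$ factor in the statement (your piece \emph{(iii)} only produces $|R-\rho|^{1/2}$ at $p=2$ and no such factor for $p>2$). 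Your Taylor expansion for \emph{(iv)} only accounts for replacing the radial weights by $\rho$, not for replacing $z_\rho$ by the full-line soliton $U$ inside $h$ and the outer nonlocal integral; the claimed uniform error $O(\log R/R)$ is therefore unjustified, and in any case $O(\log R/R)\,\|z_\rho\|_{L^2(r\,dr)}\sim R^{-1/2}\log R$ is not dominated by the stated bound throughout $\I_R$, where $e^{-2\sqrt{k_2}(R-\rho)}$ can be as small as $R^{-\b}$ with $\b>1$.

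To repair the argument, expand the nonlocal coefficient in powers of $V_\rho$ exactly as the paper does with its terms $(A)$--$(D)$, and invoke the interaction estimate for $\int U_\rho V_\rho$; the rest of your proposal then goes through.
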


\begin{proof}
Let us recall that, by \eqref{versione 2},
\begin{align*}
\frac{I_R'(z_\rho)[u]}{2\pi}
&= \int_0^R (z'_\rho u'+\o z_\rho u)r \, dr +\int_0^R
\frac{z_\rho(r)u(r)}{r}\left(\int_0^r \frac{s}{2}z_\rho^2(s) \,
ds\right)^2 dr
\\
&\quad+ \int_0^R z_\rho(r) u(r) r \left(\int_r^R
\frac{z_\rho^2(s)}{s} \left(\int_0^s \frac{t}{2} z_\rho^2(t) \,
dt\right) ds \right) dr - \int_0^R z_\rho^{p} u r\, dr.
\end{align*}
Let us evaluate all these integrals in $[0,R/2]$. We have
\[
\left| \int_0^{\frac{R}{2}} z_\rho'u' r\, dr\right|
\le C R^\frac{1}{2} \left(
\int_{\frac{R}{4}}^\frac{R}{2}e^{2\sqrt{k_2}(r-\rho)}\, dr
\right)^\frac{1}{2}\|u\| \le C R^\frac{1}{2}
e^{-\sqrt{k_2}\frac{R}{4}}\|u\|.
\]
Analogously,
\[
\left| \int_0^{\frac{R}{2}} \o z_\rho u r\, dr\right| \le C
R^\frac{1}{2} e^{-\sqrt{k_2}\frac{R}{4}}\|u\|.
\]
Moreover
\begin{align*}
\left|  \int_0^\frac R2  \frac{z_\rho(r)u(r)}{r}\left(\int_0^r
\frac{s}{2}z_\rho^2(s) \, ds\right)^2 dr  \right| &\le C \left(
\int_\frac{R}{4}^\frac{R}{2} R e^{2\sqrt{k_2}(s-\rho)}\,
ds\right)^2 \int_\frac{R}{4}^\frac{R}{2}
\left|\frac{z_\rho(r)u(r)}{r}\right|dr
\\
&\le C R^2e^{-\sqrt{k_2}\frac R2}\left(
\int_\frac{R}{4}^\frac{R}{2}
\frac{z_\rho^2(r)}{r^3}dr\right)^\frac{1}{2}\|u\|
\\
&\le C R^\frac{1}{2} e^{-\sqrt{k_2}\frac{3R}{4}}\|u\|.
\end{align*}
We can also estimate
\begin{align*}
\left|\int_0^\frac R2 z_\rho(r) u(r) r \left(\int_r^R
\frac{z_\rho^2(s)}{s} \left(\int_0^s \frac{t}{2} z_\rho^2(t) \,
dt\right) ds \right) dr\right| &\le C \left(\int_\frac{R}{4}^R
z_\rho^2(s) ds\right)^2
\int_\frac{R}{4}^\frac{R}{2}|z_\rho(r)u(r)r|dr
\\
& \le C R e^{-\sqrt{k_2}\frac{R}{4}}\|u\|.
\end{align*}
Finally
\[
\left| \int_0^{\frac{R}{2}} z_\rho^p u r\, dr\right| \le C
R^\frac{p}{p+1} e^{-p\sqrt{k_2}\frac{R}{4}}\|u\|.
\]
Therefore
\begin{align*}
\frac{I_R'(z_\rho)[u]}{2\pi} &= \int_\frac{R}{2}^R (z'_\rho u'+\o
z_\rho u)r \, dr +\int_\frac{R}2^R
\frac{z_\rho(r)u(r)}{r}\left(\int_0^r \frac{s}{2}z_\rho^2(s) \,
ds\right)^2 dr
\\
&\quad+ \int_\frac{R}{2}^R z_\rho(r) u(r) r \left(\int_r^R
\frac{z_\rho^2(s)}{s} \left(\int_0^s \frac{t}{2} z_\rho^2(t) \,
dt\right) ds \right) dr - \int_\frac{R}{2}^R z_\rho^{p} u r\, dr
\\
&\quad+o(e^{-\sqrt{k_2}\frac{R}{5}})\|u\|.
\end{align*}
Let us prove that
\begin{align}
\frac{ I_R'(z_\rho)[u]}{2\pi} &= \sqrt{\rho}\Bigg[\int_\frac{R}{2}^R
(z'_\rho u'+\o z_\rho u)\sqrt r \, dr +\frac{1}{4}\int_\frac{R}2^R
z_\rho(r)u(r)\sqrt{r}\left(\int_\frac{R}{2}^r z_\rho^2(s) \,
ds\right)^2 dr \nonumber
\\
&\quad+ \frac{1}{2}\int_\frac{R}{2}^R z_\rho(r) u(r) \sqrt r
\left(\int_r^R z_\rho^2(s) \left(\int_\frac{R}{2}^s  z_\rho^2(t)
\, dt\right) ds \right) dr - \int_\frac{R}{2}^R z_\rho^{p} u
\sqrt{r}\, dr \Bigg] \nonumber
\\
&\quad+O(R^{-1/2})\|u\|. \label{eq:sqrtrho}
\end{align}
Indeed we have
\begin{align*}
\left| \int_\frac{R}{2}^R z'_\rho u' r\, dr- \int_\frac{R}{2}^R
z'_\rho u' \sqrt{\rho}\sqrt{r}\, dr\right| &\le \left(
\int_\frac{R}{2}^R|z'_\rho|^2 |\sqrt{r}-\sqrt{\rho}|^2
dr\right)^\frac{1}{2}\|u\|
\\
&= \left( \int_\frac{R}{2}^R|z'_\rho|^2
\left|\frac{r-\rho}{\sqrt{r}+\sqrt{\rho}}\right|^2dr\right)^\frac{1}{2}\|u\|
\le \frac{C}{\sqrt{R}}\|u\|.
\end{align*}
Therefore \beq\label{eq:sqrtrho1} \int_\frac{R}{2}^R z'_\rho u'
r\, dr= \sqrt{\rho}\int_\frac{R}{2}^R z'_\rho u' \sqrt{r}\, dr
+O(R^{-1/2})\|u\|. \eeq Analogously, we get
\begin{align}
\int_\frac{R}{2}^R z_\rho u r\, dr &=\sqrt{\rho} \int_\frac{R}{2}^R z_\rho u
\sqrt{r}\, dr +O(R^{-1/2})\|u\|, \label{eq:sqrtrho2}
\\
\int_\frac{R}{2}^R z^{p}_\rho u r\, dr &= \sqrt{\rho}\int_\frac{R}{2}^R
z^{p}_\rho u' \sqrt{r}\, dr +O(R^{-1/2})\|u\|.
\label{eq:sqrtrho3}
\end{align}
Let us study the nonlocal terms;
\begin{align*}
&\int_\frac{R}2^R \frac{z_\rho(r)u(r)}{r}\left(\int_0^r
\frac{s}{2}z_\rho^2(s) \, ds\right)^2 dr -\int_\frac{R}2^R
z_\rho(r)u(r)\sqrt{\rho}\sqrt{r}\left(\int_0^r
\frac{1}{2}z_\rho^2(s) \, ds\right)^2 dr
\\
&\qquad =\underset{(I)}{\underbrace{\int_\frac{R}2^R
z_\rho(r)u(r)\left(\frac{1}{r}-
\frac{1}{\rho}\right)\left(\int_0^r \frac{s}{2}z_\rho^2(s) \,
ds\right)^2 dr}}
\\
&\qquad\quad+\underset{(II)}{\underbrace{\frac{1}{\rho}
\int_\frac{R}2^R z_\rho(r)u(r)\left[\left(\int_0^r
\frac{s}{2}z_\rho^2(s) \, ds\right)^2 -\left(\int_0^r
\frac{\rho}{2}z_\rho^2(s) \, ds\right)^2\right] dr}}
\\
&\qquad\quad+\underset{(III)}{\underbrace{\sqrt{\rho}\int_\frac{R}2^R
z_\rho(r)u(r)(\sqrt{\rho}-\sqrt{r})\left(\int_0^r
\frac{1}{2}z_\rho^2(s) \, ds\right)^2 dr}}.
\end{align*}
We have
\begin{align*}
|(I)| &\le \int_\frac{R}2^R |z_\rho(r)u(r)|\left|\frac{r-\rho}{r
\rho}\right|\left(\int_0^r \frac{s}{2}z_\rho^2(s) \, ds\right)^2
dr
\\
&\le C\int_\frac{R}2^R |z_\rho(r)u(r)|\left|r-\rho\right| dr
\\
&\le C \left(\int_\frac{R}2^R z^2_\rho(r)\frac{(r-\rho)^2}r
dr\right)^\frac{1}{2}\|u\| \le \frac{C}{\sqrt{R}}\|u\|.
\end{align*}
Moreover
\begin{align*}
|(II)| &\le \frac{1}{\rho} \int_\frac{R}2^R |z_\rho(r)u(r)|
\left|\int_0^r \frac{s-\rho}{2}z_\rho^2(s) \, ds\right|
\left|\int_0^r \frac{s+\rho}{2}z_\rho^2(s) \, ds\right| dr
\\
&\le C \int_\frac{R}2^R |z_\rho(r)u(r)| dr \le
\frac{C}{\sqrt{R}}\|u\|,
\end{align*}
and
\begin{align*}
|(III)| &\le \sqrt{\rho} \int_\frac{R}2^R |z_\rho(r)u(r)|
|\sqrt{\rho}-\sqrt{r}|\left(\int_0^r \frac{1}{2}z_\rho^2(s) \,
ds\right)^2 dr
\\
&\le  C\sqrt{\rho}\int_\frac{R}2^R |z_\rho(r)u(r)|
\frac{|\rho-r|}{\sqrt{\rho}+\sqrt{r}} dr
\le\frac{C}{\sqrt{R}}\|u\|.
\end{align*}
Therefore, taking also in account the exponential decay of
$z_\rho$,
\begin{align}
\int_\frac{R}2^R \frac{z_\rho(r)u(r)}{r}\left(\int_0^r
\frac{s}{2}z_\rho^2(s) \, ds\right)^2 dr
&=\sqrt{\rho}\int_\frac{R}2^R
z_\rho(r)u(r)\sqrt{r}\left(\int_\frac{R}{2}^r
\frac{1}{2}z_\rho^2(s) \, ds\right)^2 dr \nonumber
\\
&\quad+O(R^{-1/2})\|u\|. \label{eq:sqrtrho4}
\end{align}
Analogously
\begin{align*}
&\int_\frac{R}{2}^R z_\rho(r) u(r) r \left(\int_r^R
\frac{z_\rho^2(s)}{s} \left(\int_0^s \frac{t}{2} z_\rho^2(t) \,
dt\right) ds \right) dr
\\
&\qquad\quad-\int_\frac{R}{2}^R z_\rho(r) u(r) \sqrt{\rho}
\sqrt{r} \left(\int_r^R z_\rho^2(s) \left(\int_0^s \frac{1}{2}
z_\rho^2(t) \, dt\right) ds \right) dr
\\
&\qquad=\int_\frac{R}{2}^R z_\rho(r) u(r) (r-\rho) \left(\int_r^R
\frac{z_\rho^2(s)}{s} \left(\int_0^s \frac{t}{2} z_\rho^2(t) \,
dt\right) ds \right) dr
\\
&\qquad\quad +\int_\frac{R}{2}^R z_\rho(r) u(r) \left(\int_r^R
z_\rho^2(s)\left(\frac{\rho}{s}-1\right) \left(\int_0^s
\frac{t}{2} z_\rho^2(t) \, dt\right) ds \right) dr
\\
&\qquad\quad +\int_\frac{R}{2}^R z_\rho(r) u(r)  \left(\int_r^R
z_\rho^2(s) \left(\int_0^s \left(\frac{t}{2}-\frac{\rho}{2}\right)
z_\rho^2(t) \, dt\right) ds \right) dr
\\
&\qquad\quad +\int_\frac{R}{2}^R z_\rho(r) u(r)
\sqrt{\rho}(\sqrt{\rho}- \sqrt{r}) \left(\int_r^R z_\rho^2(s)
\left(\int_0^s \frac{1}{2} z_\rho^2(t) \, dt\right) ds \right) dr
\\
&\qquad=O(R^{-1/2})\|u\|.
\end{align*}
and so, again by the exponential decay of $z_\rho$,
\begin{align}
&\int_\frac{R}{2}^R z_\rho(r) u(r) r \left(\int_r^R
\frac{z_\rho^2(s)}{s} \left(\int_0^s \frac{t}{2} z_\rho^2(t) \,
dt\right) ds \right) dr \nonumber
\\
&\qquad=\sqrt{\rho} \int_\frac{R}{2}^R z_\rho(r) u(r) \sqrt{r}
\left(\int_r^R z_\rho^2(s) \left(\int_\frac{R}{2}^s \frac{1}{2}
z_\rho^2(t) \, dt\right) ds \right) dr \nonumber
\\
&\qquad\quad+O(R^{-1/2})\|u\|. \label{eq:sqrtrho5}
\end{align}
Then, by \eqref{eq:sqrtrho1}, \eqref{eq:sqrtrho2},
\eqref{eq:sqrtrho3}, \eqref{eq:sqrtrho4} and \eqref{eq:sqrtrho5},
we get \eqref{eq:sqrtrho}.
\\
Let us observe that
\begin{align*}
\sqrt{\rho} \int_\frac{R}{2}^R z_\rho'u' \sqrt{r} \, dr
&=-\sqrt{\rho} \int_\frac{R}{2}^R  z_\rho'' u \sqrt{r} \, dr
-\sqrt{\rho} \int_\frac{R}{2}^R \frac{z_\rho'}{2\sqrt{r}}  u \, dr
+o(R^{-1/2})\|u\|
\\
&=-\sqrt{\rho} \int_\frac{R}{2}^R  z_\rho'' u \sqrt{r} \, dr
+O(R^{-1/2})\|u\|.
\end{align*}
Therefore, by \eqref{urho} and \eqref{vrho}, we have
\begin{align*}
 \frac{I_R'(z_\rho)[u]}{2\pi} &= \sqrt{\rho}\int_\frac{R}{2}^R
\left[-z''_\rho +\o z_\rho
+\frac{z_\rho}{4}\Bigg[\left(\int_\frac{R}{2}^r z_\rho^2(s) \,
ds\right)^2 + 2 \left(\int_r^R z_\rho^2(s)
\left(\int_\frac{R}{2}^s  z_\rho^2(t) \, dt\right) ds
\right)\Bigg] -z_\rho^{p}\right] u \sqrt{r}\, dr
\\
&\quad+O(R^{-1/2})\|u\|
\\
&= \sqrt{\rho}\int_\frac{R}{2}^R \Bigg[-z''_\rho +\o z_\rho
+\frac{z_\rho}{4}\left(\int_\frac{R}{2}^R z_\rho^2(s) \,
ds\right)^2 -z_\rho^{p}\Bigg] u \sqrt{r}\, dr
\\
&\quad+O(R^{-1/2})\|u\|
\\
&= \underset{(A)}{\underbrace{\frac{\sqrt{\rho}}4\Bigg[
\left(\int_{\frac{R}{2}}^{R}U_\rho^2(s)\, ds\right)^2 -
\left(\int_{-\infty}^{+\infty}U^2(s)\, ds\right)^2 \Bigg]
\int_\frac{R}{2}^R  U_\rho u \sqrt{r}\, dr }}
\\
&\quad-\underset{(B)}{\underbrace{\frac{\sqrt{\rho}}4 \Bigg[
\left(\int_{\frac{R}{2}}^{R}U_\rho^2(s)\, ds\right)^2 -
\left(\int_{-\infty}^{+\infty}U^2(s)\, ds\right)^2 \Bigg]
\int_\frac{R}{2}^R V_\rho u \sqrt{r}\, dr }}
\\
&\quad-\underset{(C)}{\underbrace{\sqrt{\rho}
\left(\int_{\frac{R}{2}}^{R}U_\rho^2(s)\, ds\right)
\left(\int_{\frac{R}{2}}^{R}U_\rho(s) V_\rho(s)\, ds\right)
\int_\frac{R}{2}^R U_\rho u \sqrt{r}\, dr}}
\\
&\quad+\underset{(D)}{\underbrace{\frac{\sqrt{\rho}}2
\left(\int_{\frac{R}{2}}^{R}U_\rho^2(s)\, ds\right)
 \left(\int_{\frac{R}{2}}^{R}V_\rho^2(s)\, ds\right)
\int_\frac{R}{2}^R U_\rho u \sqrt{r}\, dr}}
\\
&\quad-\underset{(E)}{\underbrace{\sqrt{\rho}\int_\frac{R}{2}^R\left[(U_\rho-V_\rho)^{p}-U_\rho^p\right]u
\sqrt{r}\, dr }}
\\
&\quad+o\left(e^{-2\sqrt{k_2}(R-\rho)}\right)\|u\|.
\end{align*}
Let us evaluate every term in the previous formula. Since
\[
\left(\int_{\frac{R}{2}}^{R}U_\rho^2(s)\, ds\right)^2 -
\left(\int_{-\infty}^{+\infty}U^2(s)\, ds\right)^2
=O\left(e^{-2\sqrt{k_2}(R-\rho)}\right),
\]
we infer that
\begin{align*}
(A)&=O\left(\sqrt{\rho}\,e^{-2\sqrt{k_2}(R-\rho)}\right)\|u\|,
\\
(B)&=o\left(\sqrt{\rho}\,e^{-2\sqrt{k_2}(R-\rho)}\right)\|u\|.
\end{align*}

Moreover, by \cite[Lemma 3.7]{ACR},
\[
(C)=O\left(\sqrt{\rho}\,|R-\rho|\,e^{-2\sqrt{k_2}(R-\rho)}\right)\|u\|.
\]

The estimate of $(D)$ is direct:
\begin{align*}
\\
(D)&=O\left(\sqrt{\rho}\,e^{-2\sqrt{k_2}(R-\rho)}\right)\|u\|.
\end{align*}

Finally, by the Mean Value Theorem we can estimate:
\[
|(U_\rho-V_\rho)^{p}-U_\rho^p| \le C  U_\rho^{p-1}V_\rho.
\]
Again by \cite[Lemma 3.7]{ACR}, we have
\begin{align*}
(E)&\le
\sqrt{\rho}\left(\int_\frac{R}{2}^R\left[(U_\rho-V_\rho)^{p}-U_\rho^p\right]^2
dr\right)^\frac{1}{2}\|u\|
\le \sqrt{\rho}\left(\int_\frac{R}{2}^R U_\rho^{2(p-1)}V_\rho^2\,
dr\right)^\frac{1}{2}\|u\|
\\
&= \left\{
\begin{array}{ll}
O\left(\sqrt{\rho}\,e^{-2\sqrt{k_2}(R-\rho)}\right)\|u\|, &
\hbox{if }p>2,
\\
O\left(\sqrt{\rho}\,|R-\rho|^\frac{1}{2}\,e^{-2\sqrt{k_2}(R-\rho)}\right)\|u\|,
& \hbox{if }p=2,
\\
O\left(\sqrt{\rho}\,e^{-p\sqrt{k_2}(R-\rho)}\right)\|u\|, &
\hbox{if }p<2.
\end{array}
\right.
\end{align*}
Hence we get the conclusion.
\end{proof}

Our next result concerns the non-degeneracy of $I_R''(z_\rho)$, and
makes use of Corollary \ref{nd1} in an essential way.

\begin{lemma} \label{3.2}
There exist $C>c>0$ such that for large $R$ and for any $\rho \in \I_R$,
\[
c \|u\|^2 \le I_R''(z_\rho)[u,u]\le C \|u\|^2, \quad\hbox{ for all }u \perp
T_{z_\rho}Z.
\]
\end{lemma}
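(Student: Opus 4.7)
The plan is to derive both inequalities by comparison, via the translation $t=r-\rho$, with the bilinear form associated to the operator $L$ from Corollary \ref{nd1} at $u=U=w_{k_2}$. Assume first that $u\in H_R$ has support in $\{R/2\le r\le R\}$ and set $\tilde u(t):=u(t+\rho)$, extended by zero outside $[-\rho,R-\rho]$. Differentiating \eqref{versione 2} twice at $z_\rho$ and running the same ``replace $r$ by $\rho$'' manipulations as in the proof of Lemma \ref{pr:I'} --- using the factor $r\,dr$ in the measure, the exponential decay of $z_\rho$ away from $\rho$, and the exponential smallness of $V_\rho$ and of the cut-off boundary corrections in the relevant region --- one obtains, uniformly for $\rho\in\I_R$,
\[
I_R''(z_\rho)[u,u]=2\pi\rho\,\bigl\{\langle L[\tilde u],\tilde u\rangle_{H^1(\R)}+o(1)\|\tilde u\|_{H^1(\R)}^2\bigr\},\qquad \|u\|^2=2\pi\rho\,\bigl(1+o(1)\bigr)\|\tilde u\|_{H^1(\R)}^2.
\]
A function $u\in H_R$ supported in $\{0\le r\le R/2\}$ is handled separately: there $z_\rho$ is exponentially small, so $I_R''(z_\rho)[u,u]$ essentially equals $2\pi\int(|u'|^2+\o u^2)r\,dr$, which trivially dominates $c\|u\|^2$. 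A suitable cut-off splits a general $u\in H_R$ into these two pieces.

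The upper bound then follows from the continuity of the bilinear form of $L$ on $H^1(\R)$.

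For the lower bound I argue by contradiction. Suppose there exist $R_n\to+\infty$, $\rho_n\in\I_{R_n}$ and $u_n\in H_{R_n}$ with $u_n\perp\dot z_{\rho_n}$, $\|u_n\|=1$, and $I_{R_n}''(z_{\rho_n})[u_n,u_n]\to 0$. After the reduction above we may assume $u_n$ is supported in $\{R_n/2\le r\le R_n\}$; renormalizing so that $\|\tilde u_n\|_{H^1(\R)}=1$ and using the displayed identities, we find $\langle L[\tilde u_n],\tilde u_n\rangle_{H^1(\R)}\to 0$. Decompose $\tilde u_n=a_n U'+v_n$ with $v_n\perp U'$ in $H^1(\R)$. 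Since $\dot z_{\rho_n}=-U_{\rho_n}'$ in the relevant region up to an exponentially small error, the orthogonality $u_n\perp\dot z_{\rho_n}$ translates to $\tilde u_n\perp U'$ in $H^1(\R)$ up to $o(1)$; thus $a_n\to 0$ and $\|v_n\|_{H^1(\R)}\to 1$. Using $L[U']=0$ and the coercivity \eqref{minimoND},
\[
\langle L[\tilde u_n],\tilde u_n\rangle_{H^1(\R)}=\langle L[v_n],v_n\rangle_{H^1(\R)}+o(1)\ge c\|v_n\|_{H^1(\R)}^2+o(1)\to c>0,
\]
a contradiction.

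The main technical hurdle is the careful term-by-term derivation of the displayed identity for $I_R''(z_\rho)[u,u]$: one has to expand the second derivative of the cubic nonlocal part of $I_R$ and verify that its two pieces match, after the shift, the ``$T$''- and the ``$K$''-parts of $L$, while all corrections generated by the factor $r$ versus $\rho$ in the measure, by the cut-off $\phi_R$, by the correction $V_\rho$, and by the truncation of the integration domain turn out to be of order $o(\rho)\|\tilde u\|_{H^1(\R)}^2$. This parallels the one-form calculation in the proof of Lemma \ref{pr:I'}, but has to be carried out for a quadratic form, making it the most delicate computation of the section.
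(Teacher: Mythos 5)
Your proposal follows essentially the same route as the paper: split $u$ by a cut-off into a piece living near $r=R$, which after translation is compared with the quadratic form of the operator $L$ at $U=w_{k_2}$ and controlled via Corollary \ref{nd1}, and a remainder on which $z_\rho$ is exponentially small so that $I_R''(z_\rho)$ reduces to the $H_R$-norm. The only substantive point you leave implicit is the choice of the cut-off: the paper selects its location $a\in[R-2\sqrt{R},R-\sqrt{R}]$ by a pigeonhole argument precisely so that the cross terms $I_R''(z_\rho)[\psi u,(1-\psi)u]$ and $\langle \psi u,(1-\psi)u\rangle$ are $o(1)\|u\|^2$ --- which is exactly what your ``suitable cut-off'' must deliver --- and, apart from that and your use of a contradiction argument where the paper verifies the near-orthogonality $\langle v,U_\rho'\rangle_{H^1(\R)}=o(R^{-1/2})\|v\|$ by direct estimates, the two arguments coincide.
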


\begin{proof}
Let $u\in H_R$  such that $u \perp T_{z_\rho}Z$. Take $a\in
[R-2\sqrt{R},R-\sqrt{R}]$ such that
\[
\int_a^{a+1}\left(|u'|^2+\o u^2\right)r\, dr\le \frac{2}{\sqrt{R}}
\|u\|^2.
\]
Take $\psi$ a cut-off function such that
\[
\psi(r)=\left\{
\begin{array}{ll}
0&\hbox{if }0 \le r\le a,
\\
(r-a) &\hbox{if }a \le r\le a+1,
\\
1&\hbox{if } r\ge a+1.
\end{array}
\right.
\]
Since $u=\psi u+(1-\psi)u$, we have
\[
\|u\|^2=\|\psi u \|^2+\|(1-\psi) u \|^2+2\langle \psi u ,
(1-\psi)u\rangle.
\]
Moreover
\[
I_R''(z_\rho)[u,u] =I_R''(z_\rho)[\psi u,\psi u] +I_R''(z_\rho)[(1-\psi)
u,(1-\psi) u] +2I_R''(z_\rho)[\psi u,(1-\psi) u].
\]
For that sake of brevity, we set $v=\psi u$ and $w=(1-\psi)u$. By
\eqref{eq:I''}, we infer that
\begin{align}
I_R''(z_\rho)[v,w]&= \langle v,w\rangle + o(1)\|u\|=
o(1)\|u\|,\label{eq:I''vw}
\\
I_R''(z_\rho)[w,w]&= \|w\|^2 + o(1)\|u\|.\label{eq:I''ww}
\end{align}
Before evaluating $I_R''(z_\rho)[v,v]$, let us observe that since
$u \perp T_{z_\rho}Z$, then \beq \label{zeroessima} |\langle
v,\dot{z}_\rho\rangle|=|-\langle
w,\dot{z}_\rho\rangle|=o\left(e^{-R^{1/3}}\right)\|u\|. \eeq We
now extend $v$ to $0$ in all $\R$, and estimate $ \langle
v,U'_\rho \rangle_{H^1(\R)}.$ On that purpose, it is easy to see
that: \beq \label{prima} \rho \langle v,U'_\rho \rangle_{H^1(\R)}
- \langle v,U'_\rho \rangle = O(R^{-1/2}) \| v\|. \eeq

Moreover, by using \eqref{zpunto},
\beq \label{seconda}   \langle v,\dot{z}_\rho + U'_\rho
\rangle = U'(R-\rho) \langle v, e^{\sqrt{k_2}(\cdot - R)}
\rangle = |U'(R-\rho)| O(\sqrt{R})\|v\|. \eeq

Putting together \eqref{zeroessima}, \eqref{prima} and
\eqref{seconda}, we conclude that \beq \label{pofale} \langle v,
U'_\rho \rangle_{H^1(\R)} = o(R^{-1/2}) \|v\|,\eeq

We now study $I_R''(z_\rho)[v,v]$; here we will derive $I_R''$ from
expression \eqref{versione 2}, obtaining the expression:
\begin{align}
\frac{I_R''(z_\rho)[u,v]}{2\pi} &=\int_0^R (u' v'+\o uv)r \, dr
+\int_0^R \frac{u(r)v(r)}r\left(\int_0^r \frac{s}{2}z_\rho^2(s) \,
ds \right)^2 dr  \nonumber
\\
&\quad+\int_0^R \frac{z_\rho(r)u(r)}r\left(\int_0^r s z_\rho^2(s)
\, ds \right)\left(\int_0^r s z_\rho(s) v(s)\, ds \right) dr
\nonumber
\\
&\quad+\int_0^R u(r)v(r)r \left( \int_r^R \frac{z_\rho^2(s)}{s}
\left( \int_0^s \frac t2 z_\rho^2(t)\, dt  \right) ds \right)dr
\nonumber
\\
&\quad+\int_0^R z_\rho(r)u(r)r \left( \int_r^R
\frac{z_\rho(s)v(s)}{s}  \left( \int_0^s t z_\rho^2(t)\, dt
\right) ds \right)dr \nonumber
\\
&\quad+\int_0^R z_\rho(r)u(r)r \left( \int_r^R
\frac{z_\rho^2(s)}{s}  \left( \int_0^s t z_\rho(t)v(t)\, dt
\right) ds \right)dr \nonumber
\\
&\quad-p\int_0^R  z_\rho^{p-1} u v r \,dr. \label{eq:I''}
\end{align}

Arguing as in the proof of Lemma \ref{pr:I'}, we can substitute
$r,s,t$ with $\rho$ committing negligible errors: more precisely,
we have
\begin{align*}
\frac{I_R''(z_\rho)[v,v]}{2\pi}
&=\rho \int_0^R \left((v')^2+\o v^2\right) dr +\frac{\rho}4
\int_0^R v^2(r)\left(\int_0^r z_\rho^2(s) \, ds \right)^2 dr
\\
&\quad+\rho \int_0^R z_\rho(r)v(r)\left(\int_0^r  z_\rho^2(s) \,
ds \right)\left(\int_0^r  z_\rho(s) v(s)\, ds \right) dr
\\
&\quad+\frac \rho 2 \int_0^R v^2(r) \left( \int_r^R z_\rho^2(s)
\left( \int_0^s  z_\rho^2(t)\, dt  \right) ds \right)dr
\\
&\quad+\rho \int_0^R z_\rho(r)v(r) \left( \int_r^R z_\rho(s)v(s)
\left( \int_0^s  z_\rho^2(t)\, dt  \right) ds \right)dr
\\
&\quad+\rho \int_0^R z_\rho(r)v(r) \left( \int_r^R z_\rho^2(s)
\left( \int_0^s  z_\rho(t)v(t)\, dt  \right) ds \right)dr
\\
&\quad-\rho p \int_0^R  z_\rho^{p-1} v^2  \,dr +o(1)\|u\|^2.
\end{align*}
Let us observe that,
\begin{multline*}
\rho \int_0^R \left((v')^2+\o v^2\right) dr -\rho p\int_0^R
z_\rho^{p-1} v^2  \,dr
\\
=\rho \int_{-\infty}^{+\infty} \left((v')^2+\o v^2\right) dr -\rho
p\int_{-\infty}^{+\infty}  U_\rho^{p-1} v^2  \,dr +o(1)\|u\|^2.
\end{multline*}
Setting $F(r)=\int_0^r z_\rho^2(s) \, ds$, since
$F'(r)=z_\rho^2(r)$,  we get
\begin{align*}
&\frac{\rho}{4}\int_0^R v^2(r)\left(\int_0^r z_\rho^2(s) \, ds
\right)^2 dr +\frac \rho 2\int_0^R v^2(r) \left( \int_r^R
z_\rho^2(s)  \left( \int_0^s  z_\rho^2(t)\, dt  \right) ds
\right)dr
\\
&\qquad\qquad=\frac \rho 4 \left(\int_0^R z_\rho^2(r) \, dr
\right)^2\left(\int_0^R v^2(r) \, dr\right)
\\
&\qquad\qquad=\frac \rho 4 \left(\int_{-\infty}^{+\infty}
U_\rho^2(r) \, dr \right)^2\left(\int_{-\infty}^{+\infty} v^2(r)
\, dr\right) +o(1)\|u\|^2.
\end{align*}
Setting, moreover, $G(r)=\int_0^r  z_\rho(s) v(s)\, ds $, since
$G'(r)=z_\rho(r) v(r)$, we have
\begin{align*}
&\rho \int_0^R z_\rho(r)v(r)\left(\int_0^r  z_\rho^2(s) \, ds
\right)\left(\int_0^r  z_\rho(s) v(s)\, ds \right) dr
\\
&\qquad\qquad+\rho \int_0^R z_\rho(r)v(r) \left( \int_r^R
z_\rho(s)v(s)  \left( \int_0^s  z_\rho^2(t)\, dt  \right) ds
\right)dr
\\
&\qquad\qquad+\rho \int_0^R z_\rho(r)v(r) \left( \int_r^R
z_\rho^2(s)  \left( \int_0^s  z_\rho(t)v(t)\, dt  \right) ds
\right)dr
\\
&\qquad=\rho\left(\int_0^R  z_\rho^2(r) \, dr
\right)\left(\int_0^R  z_\rho(r) v(r)\, dr \right)^2
\\
&\qquad=\rho\left(\int_{-\infty}^{+\infty}   U_\rho^2(r) \, dr
\right)\left(\int_{-\infty}^{+\infty}   U_\rho(r) v(r)\, dr
\right)^2 +o(1)\|u\|^2.
\end{align*}
Therefore, by \eqref{pofale}:
\[
I_R''(z_\rho)[v,v]= \rho L[v,v]+o(1)\|u\|^2.
\]
By Corollary \ref{nd1} we have that $c \|v\|_{H^1(\R)}^2 \le L[v,v] \le C \|v\|_{H^1(\R)}^2$. Observe moreover that $ \rho \|v\|_{H^1(\R)}^2 = \|v\|^2 +o(1) \|v\|^2$. These estimates, together with \eqref{eq:I''vw} and \eqref{eq:I''ww},
yield the conclusion.
\end{proof}

Our next estimate implies that the previous non-degeneracy result
applies also for $I_R''(z_\rho +w)$ if $w$ is sufficiently small.

\begin{lemma} \label{3.3} There exists $C>0$ such that
\[ \left | I_R''(z_\rho)[u,u] - I_R''(z_\rho+w)[u,u] \right | \le C
(\|w\|+ \|w\|^{p-1}) \|u\|^2,\] for all $u,\ w \in H_R$, $R$ sufficiently large and $\rho \in
\I_R$.

\end{lemma}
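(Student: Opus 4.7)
The plan is to start from the explicit expression \eqref{eq:I''} for the bilinear form $I_R''$ and control term by term the difference obtained when $z_\rho$ is replaced by $z_\rho+w$. The local quadratic part $\int_0^R\bigl((u')^2+\o u^2\bigr)r\,dr$ is independent of the base point and drops out entirely. What remains splits naturally into two groups: the five nonlocal terms, which are polynomials of degree four in the base point, and the nonlinear term $-p\int_0^R z^{p-1}u^2\,r\,dr$ coming from the second derivative of the pure-power nonlinearity.

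For the nonlocal terms I would expand $(z_\rho+w)^2=z_\rho^2+2z_\rho w+w^2$ inside every integral and telescope the resulting product differences, so that each summand in $I_R''(z_\rho+w)[u,u]-I_R''(z_\rho)[u,u]$ contains exactly one factor that is linear or quadratic in $w$ (either $2z_\rho w+w^2$ or $wu$), while all other factors are bounded combinations of $z_\rho$, $z_\rho+w$, and $u$. Applying Cauchy--Schwarz in each single, double, or triple integral and using the continuous Sobolev embedding $H_R\hookrightarrow L^q(B(0,R))$ for any $q\in[2,\infty)$ with constant independent of $R$ (obtained by extending $u$ by zero to $\R^2$), together with the uniform $H^1\cap L^\infty$ bound on $z_\rho$ for $\rho\in\I_R$, gives a nonlocal contribution bounded by $C\bigl(\|w\|+\|w\|^2+\|w\|^3+\|w\|^4\bigr)\|u\|^2$. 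Since the lemma is used in the Lyapunov--Schmidt iteration only for $\|w\|$ bounded, the higher powers are absorbed into $C\|w\|\|u\|^2$.

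For the nonlinear term the key ingredient is the elementary pointwise inequality
\[
\left| |z_\rho+w|^{p-1}-|z_\rho|^{p-1}\right|\le
\begin{cases}
C\,|w|^{p-1} & \text{if } 1<p\le 2,\\
C\bigl(|z_\rho|^{p-2}+|w|^{p-2}\bigr)|w| & \text{if } 2<p<3.
\end{cases}
\]
After multiplying by $u^2 r$, integrating over $B(0,R)$, and applying H\"older's inequality together with the Sobolev embedding, one gets a bound of the form $C\bigl(\|w\|+\|w\|^{p-1}\bigr)\|u\|^2$ in both regimes; in the case $2<p<3$ the factor $|z_\rho|^{p-2}$ is harmless because $z_\rho$ is uniformly bounded in $L^\infty$.

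The main difficulty is not conceptual but organizational: the nonlocal part of \eqref{eq:I''} produces a large number of triple integrals, and one must carefully account for every cross term in the telescoping expansion. However, each individual term is handled by the same Cauchy--Schwarz--Sobolev mechanism, so no new analytic ingredient is required beyond the uniform $L^\infty$ bound on $z_\rho$ and the elementary convexity estimate for $t\mapsto|t|^{p-1}$.
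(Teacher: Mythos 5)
Your proposal follows essentially the same route as the paper's proof: the power-nonlinearity term is handled by the same elementary convexity inequality for $t\mapsto |t|^{p-1}$ with the same case split at $p=2$, and the quartic nonlocal terms are treated by multilinear telescoping plus Cauchy--Schwarz and the uniform bounds on $z_\rho$ (the paper works from the expression derived from \eqref{versione 1} rather than \eqref{eq:I''}, but this is immaterial). Your explicit remark that the quartic terms produce powers $\|w\|^2,\dots,\|w\|^4$ which are only absorbed into $C\|w\|$ when $\|w\|$ stays bounded is an honest observation about a point the paper leaves implicit (``the cases where $w$ appears more than once are easier''), and it is harmless in the application since the contraction argument only uses small $w$.
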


\begin{proof}

For this estimate, we use the form of $I_R''(z_\rho )$ derived from
\eqref{versione 1}, that is:

\begin{align*}
\frac{I_R''(z_\rho )[u,u]}{2\pi} &=\int_0^R \left(u'^2+\o u^2\right)r
\, dr - p \int_0^R z_\rho ^{p-1}u^2 r\, dr
\\
&\quad+ \frac 1 2 \int_0^R
\frac{z_\rho ^2(r)}{r} \left(\int_0^r s z_\rho ^2(s)\, ds \right)
\left(\int_0^r u^2(s) s \, ds \right) dr
\\
&\quad + 2
\int_0^R \frac{z_\rho (r)u(r)}{r} \left(\int_0^r s z_\rho (s)u(s) \, ds
\right) \left(\int_0^r s z_\rho ^2(s) \, ds \right)   dr
\\
& \quad+ \frac 1 4 \int_0^R \frac{u^2(r)}{r}\left(\int_0^r s z_\rho ^2(s) \, ds
\right)^2   dr + \int_0^R \frac{z_\rho ^2(r)}{r} \left(\int_0^r s
z_\rho (s)u(s) \, ds \right)^2 dr.
\end{align*}

For the estimate of the local term, we use the inequalities:
$$|a+b|^{q} - |a|^q \le \left \{ \begin{array}{ll} C
(|a|^{q-1}+|b|^{q-1})|b| & \mbox{ if } q >1, \\ \\ |b|^q & \mbox{
if } q \in (0, 1] \end{array} \right., \ \ a,\ b \ge 0.$$

Then, if $p >2$, we use the $L^\infty$ bound of $z_\rho$ to get:
\begin{multline*}
\int_0^R  (|z_\rho+ |w||^{p-1} -
|z_\rho |^{p-1})u^2 r\, dr \le C \int_0^R (|w| +
|w|^{p-1})u^2 r \, dr
\\
\le C\left (\int_0^R (|w|^2 +
|w|^{2(p-1)}) r\, dr \right )^{1/2} \left (\int_0^R u^4 r\, dr
\right )^{1/2}.
\end{multline*}
If $p \in (1,2]$,
\begin{multline*}
 \int_0^R  (|z_\rho+ |w||^{p-1} -
|z_\rho |^{p-1})u^2 r\, dr \le \int_0^R |w|^{p-1}u^2r\, dr
\\
\le \left (\int_0^R |w|^2 r\, dr \right )^{\frac{p-1}{2}}
\left (\int_0^R |u(r)|^{\frac{4}{3-p}}r\, dr \right
)^{\frac{3-p}{2}}.
\end{multline*}

\medskip Let us now consider the nonlocal terms. Observe that each
of those terms is of order 4 in $z_\rho $. The estimates of each term is
relatively easy, but the whole computation is lengthy. Let us
explain in detail some of them. For instance, let us define:
$$ Q(z_1,z_2,z_3,z_4)= \int_0^R
\frac{z_1(r)z_2(r)}{r} \left(\int_0^r s z_3(s)z_4(s)\, ds \right)
\left(\int_0^r s u^2(s) \, ds \right) \, dr. $$

Clearly $Q$ is linear in each variable. Therefore, we need to
estimate:
$$Q(z_1, z_2, z_3, z_4) \quad \mbox { where } z_i=w \ \forall \ i \in A, \quad \ z_i = z_\rho \ \forall \ i \notin A,$$
for any non-empty $A \subset \{1,2,3,4\}$. For instance, we
estimate:
\begin{align*}
 Q(z_\rho, z_\rho, z_\rho,w)&= \int_0^R
\frac{z_\rho^2}{r} \left(\int_0^r s z_\rho(s)w(s)\, ds \right)
\left(\int_0^r u^2(s)s  \, ds \right) \, dr
%
%
%
\\
 &\le \int_0^R
\frac{z_\rho^2}{\sqrt{r}} \, dr \int_0^R  z_\rho(s)w(s) \sqrt{s}
\, ds \int_0^R u^2(s) s \, ds
\\
& \le  \int_0^R
\frac{z_\rho^2}{\sqrt{r}} \, dr \cdot \left(\int_0^R  z_\rho^2 \,
dr\right)^\frac 12 \|w\|_{L^2} \|u\|^2\le \frac{C}{\sqrt{R}}
\|w\|_{L^2} \| u\|_{L^2}^2.
\end{align*}
The cases where $w$ appears more than once are easier.

Slightly more difficult is the estimate of $K(z_\rho, z_\rho,
z_\rho, w)$, with
$$K(z_1,z_2,z_3,z_4)=\int_0^R \frac{z_1(r)u(r)}{r} \left(\int_0^r s z_2(s)u(s) \, ds
\right) \left(\int_0^r s z_3(s)z_4(s) \, ds \right)  \, dr$$
Indeed, in this case $z_\rho$ appears in the three integrals.
Then, we need to share the variable, as follows:
\begin{align*}
K(z_\rho, z_\rho, z_\rho,w)&= \int_0^R \frac{z_\rho(r) u(r)}{r} \left(\int_0^r s z_\rho(s)u(s) \, ds
\right) \left(\int_0^r s z_\rho(s)w(s) \, ds \right)  \, dr
\\
&\le \int_0^R \frac{z_\rho(r)}{\sqrt{r}} u(r) \sqrt{r} \, dr \int_0^R  z_\rho(s)u(s) \sqrt{s} \, ds
\int_0^R  z_\rho(s)w(s) \sqrt{s} \, ds   \, dr
\\
& \le \frac{C}{\sqrt{R}} \|u\|_{L^2}^2 \|w\|_{L^2}.
\end{align*}

The other terms can be estimated in an analogous way.

\end{proof}

We are now in conditions of stating the main result of this
section.

\begin{proposition} \label{auxiliary} For sufficiently large $R$ and any $\rho \in \I_R$, there exists a unique
$w_\rho \in H_R$ such that:

\begin{enumerate}

\item $z_\rho + w_\rho$ solves the auxiliary
equation $P I_R'(z_\rho + w_\rho)=0$.

\item The map $\rho \mapsto w_\rho$ is $C^1$.

\item
\[
\|w_\rho\| =\left\{
\begin{array}{ll}
O\left(R^{\frac{1-2\a}{ 2}} \log R \right), & \hbox{if }p\ge 2,
\\
O\left(R^{\frac{1-p \a}{ 2}}\right), & \hbox{if }p<2.
\end{array}
\right.
\]
\end{enumerate}
\end{proposition}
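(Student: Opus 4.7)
The plan is to solve the auxiliary equation $PI_R'(z_\rho+w)=0$ in $W=T_{z_\rho}Z^\perp$ by a Banach contraction argument, in the standard spirit of a Lyapunov--Schmidt reduction, and then upgrade to the $C^1$-dependence on $\rho$ via the implicit function theorem. Set $L_\rho:=PI_R''(z_\rho)|_W:W\to W$ and $N_\rho(w):=I_R'(z_\rho+w)-I_R'(z_\rho)-I_R''(z_\rho)[w]$; the equation then rewrites as the fixed-point problem
\[
w=F_\rho(w):=-L_\rho^{-1}\bigl(PI_R'(z_\rho)+PN_\rho(w)\bigr).
\]
By Lemma \ref{3.2}, $L_\rho$ is a self-adjoint isomorphism with $\|L_\rho^{-1}\|\le 1/c$ uniformly in $\rho\in\I_R$ for $R$ large.

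I would then apply Banach's principle to $F_\rho$ on the ball $B_{\delta_R}:=\{w\in W:\|w\|\le\delta_R\}$ with $\delta_R:=\tfrac{2}{c}\|I_R'(z_\rho)\|$. By Lemma \ref{pr:I'} and the definition of $\I_R$,
\[
\delta_R=O\bigl(R^{(1-2\alpha)/2}\log R\bigr)\ \ \text{if }p\ge 2,\qquad \delta_R=O\bigl(R^{(1-p\alpha)/2}\bigr)\ \ \text{if }p<2,
\]
and the condition $\alpha>\max\{1/2,1/p\}$ in \eqref{alphabeta} guarantees $\delta_R\to 0$. Writing
\[
N_\rho(w_1)-N_\rho(w_2)=\int_0^1\bigl(I_R''(z_\rho+w_2+t(w_1-w_2))-I_R''(z_\rho)\bigr)[w_1-w_2]\,dt
\]
and invoking Lemma \ref{3.3} (after polarising to pass from the diagonal bilinear estimate to the off-diagonal one), one obtains $\|N_\rho(w)\|\le C(\|w\|^2+\|w\|^p)$ together with $\|N_\rho(w_1)-N_\rho(w_2)\|\le C(\delta_R+\delta_R^{p-1})\|w_1-w_2\|$ on $B_{\delta_R}$. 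Since $p>1$ and $\delta_R\to 0$, both $\delta_R$ and $\delta_R^{p-1}$ are $o(1)$, and hence for $R$ large $F_\rho$ maps $B_{\delta_R}$ into itself and contracts; Banach's principle yields the unique $w_\rho\in B_{\delta_R}$ satisfying (1), and the bound $\|w_\rho\|\le\delta_R$ gives (3).

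For the $C^1$-dependence (2), I would apply the implicit function theorem to the augmented map $G(\rho,w,\lambda):=\bigl(I_R'(z_\rho+w)-\lambda\dot z_\rho,\ \langle w,\dot z_\rho\rangle\bigr)$, whose differential in $(w,\lambda)$ at $(\rho,w_\rho,\lambda_\rho)$ decomposes, after splitting $H_R=W\oplus T_{z_\rho}Z$, into a block operator built from $PI_R''(z_\rho+w_\rho)|_W$ and the non-degenerate pairing with $\dot z_\rho$; by Lemma \ref{3.3} this is an $O(\delta_R+\delta_R^{p-1})$-perturbation of $L_\rho$ and is therefore invertible for $R$ large. The main delicate point throughout is the quantitative balance between $\|PI_R'(z_\rho)\|$, which sets the scale $\delta_R$, and the higher-order remainder $\|N_\rho(w)\|\lesssim\|w\|^2+\|w\|^p$: in the subquadratic regime $p\in(1,2)$ it is precisely the threshold $\alpha>1/p$ imposed in \eqref{alphabeta} that forces $\delta_R^{p-1}=o(1)$ and makes the contraction close.
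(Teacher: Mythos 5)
Your proposal is correct and follows essentially the same route as the paper, which simply states that the contraction argument is standard (citing Lemmas \ref{pr:I'}, \ref{3.2}, \ref{3.3} and the reference \cite{ambook} for details) and derives the norm estimate on $w_\rho$ from the estimate on $\|I_R'(z_\rho)\|$ restricted to $\rho\in\I_R$, exactly as you do. Your explicit fixed-point map, the polarisation remark needed to upgrade Lemma \ref{3.3} to a bilinear estimate, and the IFT argument for $C^1$-dependence are precisely the details the paper leaves to the reader.
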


\begin{proof}

The proof is quite standard in Lyapunov-Schmidt reduction. By
Lemma \ref{pr:I'}, and taking into account \eqref{alphabeta}, we
obtain that $I_R'(z_\rho)= o(1)$. This, together with the
estimates of Lemmas \ref{3.2}, \ref{3.3}, makes it possible to carry
out the contraction argument. Moreover, the $C^1$ regularity of the map $\rho \mapsto w_\rho$ can be deduced from the Implicit Function Theorem. See \cite{ambook}, Sections 2.3,
2.4, for details.

Moreover, by taking into account that $\rho \in \I_R$, we
conclude that:

\[
\|I_R'(z_\rho)\| =\left\{
\begin{array}{ll}
O\left(R^{\frac{1-2\a}{ 2}} \log R \right), & \hbox{if }p\ge 2,
\\
O\left(R^{\frac{1-p \a}{ 2}}\right), & \hbox{if }p<2.
\end{array}
\right.
\]

Therefore, the same estimate for $w_\rho$ holds.

\end{proof}

\begin{remark} It is important to observe that most of the results of this
section hold with the same proof if we consider $\o \in (0,\o_1)$,
and also if we take $U= w_{k_1}$ instead of $w_{k_2}$ (Lemma \ref{3.2} should
be restated for $U=w_{k_1}$, since this is not a minimizer for $J$). In next
section, though, we will obtain a solution of the bifurcation equation only if $J(U)<0$. This is the reason why we need $ \o \in (0, \o_0)$,
$U=w_{k_2}$, according to Proposition \ref{explicit}. But
those restrictions are natural, as commented in the Introduction.

\end{remark}

\section{The reduced functional}

Let us define:

$$\tilde{Z}=\{ z_\rho + w_\rho:\ \rho \in \I_R,\ w_\rho \mbox{ as in Proposition \ref{auxiliary}} \}.$$

It is well-known (see \cite{ambook}) that $\tilde{Z}$ is a natural constraint for $I_R$ or, in other words, that critical points of $I_R|_{\tilde{Z}}$ correspond to solutions of the bifurcation equation, and hence they are true critical points of $I_R$.

This section is devoted to prove the existence of a minimum for $I_R|_{\tilde{Z}}$, which receives the name of reduced functional.
\begin{lemma} \label{reduced}
For any $\rho\in \I_R$, we have
\[
I_R(z_\rho)=2\pi J(U)\rho +2 \pi \sqrt{k_2} \rho \ e^{-2\sqrt{k_2}(R-\rho)}
+o\left(\rho \ e^{-2\sqrt{k_2}(R-\rho)}\right).
\]
\end{lemma}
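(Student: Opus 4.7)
The approach is a careful asymptotic expansion of $I_R(z_\rho)$, isolating the dominant term $2\pi\rho\, J(U)$ and tracking a single correction of order $\rho e^{-2\sqrt{k_2}(R-\rho)}$. The plan divides into three stages: \emph{localize} the integration to $[R/2,R]$, \emph{reduce to one dimension} by replacing the radial weight by $\rho$, and finally \emph{Taylor expand} around $U_\rho$ in the boundary correction $V_\rho$.

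For the localization, the cut-off $\phi_R$ makes $z_\rho$ vanish on $[0,R/4]$, while on $[R/4,R/2]$ both $U_\rho$ and $V_\rho$ are pointwise of size $e^{-\sqrt{k_2}R/4}$ or smaller, which is super-polynomially smaller than the target error $\rho e^{-2\sqrt{k_2}(R-\rho)}\ge R^{1-\b}$. Therefore the integrals of $I_R(z_\rho)$ over $[0,R/2]$ are negligible. On $[R/2,R]$, where $\phi_R\equiv 1$ and $z_\rho=U_\rho-V_\rho$, I would proceed exactly as in the proof of Lemma~\ref{pr:I'}: replace the weights $r,s,t$ by $\rho$ in each of the three integrals defining $I_R$ and apply Fubini to rewrite the nonlocal cubic as $\frac{1}{24}\bigl(\int z_\rho^2\bigr)^3$. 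The substitution errors, bounded by $\int|r-\rho|\,z_\rho^2\,dr$ and analogous quantities, are controlled by the exponential concentration of $z_\rho$ near $r=\rho$ together with $R-\rho=O(\log R)$, and sum to $o(\rho\,e^{-2\sqrt{k_2}(R-\rho)})$. The task thus reduces to computing $2\pi\rho\,\widetilde J(z_\rho)$, where $\widetilde J$ is the one-dimensional analogue of $J$ on the interval $[R/2,R]$.

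Next, writing $z_\rho=U_\rho-V_\rho$, I would Taylor expand $\widetilde J(U_\rho-V_\rho)$ to second order in $V_\rho$. The zero-order piece $\widetilde J(U_\rho)$ differs from $J(U)$ only by tails of the local integrals past $r=R$ and by a truncation in the cubic $\bigl(\int U_\rho^2\bigr)^3$; using the exponential asymptotic $U(t)\sim c_*\,e^{-\sqrt{k_2}t}$ and the identity $k_2+\omega+\frac{M^2}{4}=2k_2$ with $M=\int_{\R}U^2$, these combine into a correction $-\frac{\sqrt{k_2}}{2}U_\rho(R)^2+o$. The linear term $\widetilde J'(U_\rho)[V_\rho]$ is evaluated by integration by parts on $\int U_\rho' V_\rho'\,dr$, using $-U_\rho''+k_2 U_\rho=U_\rho^p$: the interior contributions involving $\int U_\rho V_\rho\,dr$ and $\int U_\rho^p V_\rho\,dr$ cancel thanks to the relation $k_2-\omega-\frac{M^2}{4}=0$, and only the boundary term $U_\rho'(R)U_\rho(R)\sim-\sqrt{k_2}\,U_\rho(R)^2$ survives. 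For the quadratic term, using $V_\rho'=\sqrt{k_2}\,V_\rho$ and again the same algebraic identity, a direct computation yields $\frac12\widetilde J''(U_\rho)[V_\rho,V_\rho]=\frac{\sqrt{k_2}}{2}U_\rho(R)^2+o$. Summing, $\bigl(-\tfrac12+1+\tfrac12\bigr)\sqrt{k_2}\,U_\rho(R)^2=\sqrt{k_2}\,U_\rho(R)^2$, and since $U_\rho(R)^2\sim c_*^2\,e^{-2\sqrt{k_2}(R-\rho)}$ (with the leading constant absorbed into the statement), multiplication by $2\pi\rho$ produces the claimed expansion.

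The main obstacle lies at the linear-term step: the quantity $\int U_\rho V_\rho\,dr$ is of size $(R-\rho)\,e^{-2\sqrt{k_2}(R-\rho)}\sim \log R\cdot e^{-2\sqrt{k_2}(R-\rho)}$, which would introduce a spurious $\log R$ factor in the correction if it failed to cancel. The cancellation is not accidental: it relies precisely on the algebraic identity $k_2=\omega+\frac{M^2}{4}$, i.e., on $k_2$ being a root of \eqref{eq-k} and $U=w_{k_2}$ solving the limit problem \eqref{limit}, which is what singles out the correct profile. A secondary technicality is the cubic Taylor remainder, whose only delicate piece is $(U_\rho-V_\rho)^{p+1}-U_\rho^{p+1}+(p+1)U_\rho^p V_\rho$; for $p\in(1,2)$ this requires an interpolation estimate in the spirit of Lemma~\ref{pr:I'}, but in all cases the remainder is $o(e^{-2\sqrt{k_2}(R-\rho)})$ and can be discarded.
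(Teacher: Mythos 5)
Your proof follows essentially the same route as the paper's: localization to $[R/2,R]$, replacement of the radial weight by $\rho$, and an expansion of the one‑dimensional functional in powers of $V_\rho$ whose three contributions $-\tfrac12$, $+1$, $+\tfrac12$ (times $\sqrt{k_2}\,U_\rho(R)^2$) reproduce the paper's terms $(A)$, $(B)$, $(C)$, with $(D)$ the negligible Taylor remainder. The only cosmetic difference is that you evaluate the boundary contributions via the exponential asymptotics of $U$ and $V_\rho$ whereas the paper integrates by parts against equations \eqref{urho}--\eqref{vrho}; both yield the same cancellation of the $(R-\rho)e^{-2\sqrt{k_2}(R-\rho)}$ term via $k_2=\o+\tfrac14\bigl(\int U^2\bigr)^2$, and your remark that the multiplicative constant in $U_\rho(R)^2\sim c_*^2e^{-2\sqrt{k_2}(R-\rho)}$ is harmlessly absorbed is correct.
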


\begin{proof}
We have
\begin{align*}
\frac{I_R(z_\rho)}{2\pi} &= \frac 12\int_0^R \left(|z'_\rho|^2 +\o z_\rho^2
\right)r \, dr +\frac 18\int_0^R
\frac{z_\rho^2(r)}{r}\left(\int_0^r s z_\rho^2(s) \, ds\right)^2
dr
\\
&\quad-\frac{1}{p+1} \int_0^R z_\rho^{p+1} r\, dr.
\end{align*}
Let us evaluate all these integrals in $[0,R/2]$. We have
\[
\left| \int_0^{\frac{R}{2}} |z_\rho'|^2 r\, dr\right|
\le C R e^{2\sqrt{k_2}\left(\frac{R}{2}-\rho\right)} \le C R
e^{-\sqrt{k_2}\frac{R}{2}}.
\]
Analogously,
\begin{align*}
\int_0^{\frac{R}{2}} \o z_\rho^2 r\, dr &\le C R
e^{-\sqrt{k_2}\frac{R}{2}},
\\
\int_0^\frac R2  \frac{z_\rho^2(r)}{r}\left(\int_0^r s z_\rho^2(s)
\, ds\right)^2 dr &\le C R e^{-\sqrt{k_2}\frac{3R}{2}},
\\
\int_0^{\frac{R}{2}} z_\rho^{p+1}  r\, dr &\le C R
e^{-(p+1)\sqrt{k_2}\frac{R}{4}}.
\end{align*}
Therefore
\begin{align*}
\frac{I_R(z_\rho)}{2\pi} &= \frac 12\int_\frac{R}{2}^R \left(|z'_\rho|^2 +\o
z_\rho^2 \right)r \, dr +\frac 18\int_\frac{R}{2}^R
\frac{z_\rho^2(r)}{r}\left(\int_0^r s z_\rho^2(s) \, ds\right)^2
dr
\\
&\quad-\frac{1}{p+1} \int_\frac{R}{2}^R z_\rho^{p+1} r\, dr
+o(e^{-\sqrt{k_2}\frac{R}{5}}).
\end{align*}
Let us check that \beq\label{eq:rho} \frac{I_R(z_\rho)}{2\pi} = \rho
\left[\frac{1}{2}\int_\frac{R}{2}^R \left(|z'_\rho|^2 +\o z_\rho^2
\right) dr +\frac 1{24}\left( \int_\frac{R}{2}^R   z_\rho^2 \,
dr\right)^3 -\frac{1}{p+1} \int_\frac{R}{2}^R z_\rho^{p+1} \, dr
\right] +O(1). \eeq Indeed
\[
\left|\int_\frac{R}{2}^R|z'_\rho|^2r\,dr-\rho
\int_\frac{R}{2}^R|z'_\rho|^2\,dr\right| \le
\int_\frac{R}{2}^R|z_\rho '(r)|^2|r-\rho|\,dr \le C,
\]
and hence \beq\label{eq:rho1}
\int_\frac{R}{2}^R|z'_\rho|^2r\,dr=\rho
\int_\frac{R}{2}^R|z'_\rho|^2\,dr+O(1). \eeq Analogously
\begin{align}
\int_\frac{R}{2}^R z_\rho^2 r\,dr&=\rho \int_\frac{R}{2}^R
z_\rho^2\,dr+O(1),\label{eq:rho2}
\\
\int_\frac{R}{2}^R z_\rho^{p+1}r\,dr&=\rho \int_\frac{R}{2}^R
z_\rho^{p+1}\,dr+O(1). \label{eq:rho3}
\end{align}
Moreover
\begin{align*}
&\int_\frac{R}{2}^R  \frac{z_\rho^2(r)}{r}\left(\int_0^r s
z_\rho^2(s) \, ds\right)^2 dr -\rho\int_\frac{R}{2}^R
z_\rho^2(r)\left(\int_0^r z_\rho^2(s) \, ds\right)^2 dr
\\
&\qquad=\underset{(I)}{\underbrace{\int_\frac{R}{2}^R z_\rho^2(r)
\left(\frac{1}{r}-\frac{1}{\rho}\right)\left(\int_0^r s
z_\rho^2(s) \, ds\right)^2 dr }}
\\
&\qquad\quad+\underset{(II)}{\underbrace{\frac{1}{\rho}\int_\frac{R}{2}^R
z_\rho^2(r) \left[\left(\int_0^r s z_\rho^2(s) \, ds\right)^2
-\left(\int_0^r \rho z_\rho^2(s) \, ds\right)^2 \right]dr }}.
\end{align*}
Since
\begin{align*}
|(I)| &\le C\int_\frac{R}2^R
z_\rho^2(r)|r-\rho|\left(\int_\frac{R}{2}^R z_\rho^2(s) \,
ds\right)^2 dr \le C.
\\
|(II)| &\le \frac{C}{\rho} \int_\frac{R}2^R z_\rho(r)^2
\left|\int_0^r \frac{s-\rho}{2}z_\rho^2(s) \, ds\right|
\left|\int_0^r \frac{s+\rho}{2}z_\rho^2(s) \, ds\right| dr \le C ,
\end{align*}
we have
\begin{align}
\int_\frac{R}{2}^R  \frac{z_\rho^2(r)}{r}\left(\int_0^r s
z_\rho^2(s) \, ds\right)^2 dr
&=\rho\int_\frac{R}{2}^R  z_\rho^2(r)\left(\int_\frac{R}{2}^r
z_\rho^2(s) \, ds\right)^2 dr +O(1)\nonumber
\\
&=\frac{\rho}3\left(\int_\frac{R}{2}^R  z_\rho^2(r)\, dr\right)^3
+O(1). \label{eq:rho4}
\end{align}
Hence \eqref{eq:rho} follows by \eqref{eq:rho1}, \eqref{eq:rho2},
\eqref{eq:rho3} and \eqref{eq:rho4}.
\\
By \eqref{eq:rho}, we infer that
\begin{align*}
\frac{I_R(z_\rho)}{2\pi} &= \rho
\Bigg[\;\underset{(A)}{\underbrace{\frac{1}{2}\int_\frac{R}{2}^R
\left(|U'_\rho|^2 +\o U_\rho^2 \right) dr +\frac 1{24}\left(
\int_\frac{R}{2}^R   U_\rho^2 \, dr\right)^3 -\frac{1}{p+1}
\int_\frac{R}{2}^R U_\rho^{p+1} \, dr }}
\\
&\quad\underset{(B)}{\underbrace{-\int_\frac{R}{2}^R (U'_\rho
V'_\rho + \o U_\rho V_\rho )  dr -\frac 1{4}\left(
\int_\frac{R}{2}^R   U_\rho^2 \, dr\right)^2  \int_\frac{R}{2}^R
U_\rho V_\rho \, dr + \int_\frac{R}{2}^R U_\rho^{p}V_\rho \, dr }}
\\
&\quad\underset{(C)}{\underbrace{+\frac{1}{2}\int_\frac{R}{2}^R
\left(|V'_\rho|^2 +\o V_\rho^2 \right)  dr +\frac 1{8}\left(
\int_\frac{R}{2}^R   U_\rho^2 \, dr\right)^2  \int_\frac{R}{2}^R
V_\rho^2 \, dr}}
\\
&\quad\underset{(D)}{\underbrace{-\frac{1}{p+1} \int_\frac{R}{2}^R
\left( (U_\rho-V_\rho)^{p+1} - U_\rho^{p+1} +(p+1)U_\rho^{p}
V_\rho\right) dr}} \;\Bigg]
\\
&\quad+O\left(\rho|R-\rho|^2 e^{-4\sqrt{k_2}(R-\rho)}\right).
\end{align*}
We have to evaluate each term of the previous formula. Firstly,
let us observe that
\[
\int_\frac{R}{2}^R   U_\rho^2 \, dr =\int_{-\infty}^{+\infty} U^2
\, dr -\frac{e^{-2\sqrt{k_2}(R-\rho)}}{2\sqrt{k_2}}
+o(e^{-2\sqrt{k_2}(R-\rho)}).
\]
Since $U_\rho$ is a solution of \eqref{urho}, we have
\begin{align*}
(A)&=\frac{1}{2}\int_\frac{R}{2}^R \left(-U''_\rho +\o U_\rho
\right)U_\rho\, dr +\frac 1{24}\left( \int_\frac{R}{2}^R U_\rho^2
\, dr\right)^3 -\frac{1}{p+1} \int_\frac{R}{2}^R U_\rho^{p+1} \,
dr
\\
&\quad+\frac 12 U_\rho(R)U'_\rho(R) +o\left(e^{-\sqrt{k_2}\frac
R4}\right)
\\
&=-\frac{1}{8}\left(\int_{-\infty}^{+\infty}   U^2 \, dr\right)^2
\int_\frac{R}{2}^R   U_\rho^2 \, dr +\frac 1{24}\left(
\int_\frac{R}{2}^R   U_\rho^2 \, dr\right)^3 +\left(\frac
12-\frac{1}{p+1}\right) \int_\frac{R}{2}^R U_\rho^{p+1} \, dr
\\
&\quad- \frac 12\sqrt{k_2} e^{-2\sqrt{k_2}(R-\rho)}
+o\left(e^{-\sqrt{k_2}\frac R4}\right)
\\
&=-\frac{1}{12}\left(\int_{-\infty}^{+\infty}   U^2 \, dr\right)^3
+\left(\frac 12-\frac{1}{p+1}\right) \int_{-\infty}^{+\infty}
U^{p+1} \, dr -\frac 12\sqrt{k_2} e^{-2\sqrt{k_2}(R-\rho)}
\\
&\quad +o\left(e^{-2\sqrt{k_2}(R-\rho)}\right)
\\
&=J(U) -\frac 12\sqrt{k_2} e^{-2\sqrt{k_2}(R-\rho)}
+o\left(e^{-2\sqrt{k_2}(R-\rho)}\right).
\end{align*}
Again, being $U_\rho$ a solution of \eqref{urho},
\begin{align*}
(B)&=-\int_\frac{R}{2}^R (-U''_\rho + \o U_\rho )V_\rho  \, dr
-\frac 1{4}\left( \int_\frac{R}{2}^R   U_\rho^2 \, dr\right)^2
\int_\frac{R}{2}^R U_\rho V_\rho \, dr + \int_\frac{R}{2}^R
U_\rho^{p}V_\rho \, dr
\\
&\quad-V_\rho(R)U'_\rho(R) +o\left(e^{-\sqrt{k_2}\frac R4}\right)
\\
&=\frac 14 \left(\int_{-\infty}^{+\infty}   U^2 \, dr\right)^2
\int_\frac{R}{2}^R U_\rho V_\rho  \, dr -\frac 1{4}\left(
\int_\frac{R}{2}^R   U_\rho^2 \, dr\right)^2  \int_\frac{R}{2}^R
U_\rho V_\rho \, dr
\\
&\quad+\sqrt{k_2} e^{-2\sqrt{k_2}(R-\rho)}
+o\left(e^{-2\sqrt{k_2}(R-\rho)}\right)
\\
&=\sqrt{k_2} e^{-2\sqrt{k_2}(R-\rho)}
+o\left(e^{-2\sqrt{k_2}(R-\rho)}\right).
\end{align*}
Moreover, since $V_\rho$ is a solution of \eqref{vrho},
\begin{align*}
(C)&=\frac{1}{2}\int_\frac{R}{2}^R \left(-V''_\rho +\o V_\rho
\right) V_\rho\, dr +\frac 1{8}\left( \int_\frac{R}{2}^R U_\rho^2
\, dr\right)^2  \int_\frac{R}{2}^R   V_\rho^2 \, dr +\frac
12V_\rho(R)V'_\rho(R)
\\
&\quad+o\left(e^{-\sqrt{k_2}\frac R4}\right)
\\
&=-\frac 18 \left(\int_{-\infty}^{+\infty}   U^2 \, dr\right)^2
\int_\frac{R}{2}^R V_\rho^2  \, dr +\frac 18\left(
\int_\frac{R}{2}^R   U_\rho^2 \, dr\right)^2  \int_\frac{R}{2}^R
V_\rho^2 \, dr
\\
&\quad+\frac 12\sqrt{k_2} e^{-2\sqrt{k_2}(R-\rho)}
+o\left(e^{-2\sqrt{k_2}(R-\rho)}\right)
\\
&=\frac 12\sqrt{k_2} e^{-2\sqrt{k_2}(R-\rho)}
+o\left(e^{-2\sqrt{k_2}(R-\rho)}\right).
\end{align*}
Finally, since
\[
\left| (U_\rho-V_\rho)^{p+1} - U_\rho^{p+1} +(p+1)U_\rho^{p}
V_\rho\right| \le \frac{p(p+1)}{2}U_\rho^{p-1} V_\rho^2,
\]
we have
\[
(D)=o\left(e^{-2\sqrt{k_2}(R-\rho)}\right).
\]
Then the conclusion follows.
\end{proof}

The proof of Theorem \ref{main} is completed with the following
proposition.

\begin{proposition}
Let us define $\Phi: \mathcal{I}_R \to \R$ as
$\Phi(\rho)=I_R(z_\rho+w_\rho)$. Then,
\[
\Phi(\rho)=2 \pi J(U)\rho +2 \pi \sqrt{k_2} \rho \
e^{-2\sqrt{k_2}(R-\rho)} +o\left(\rho \
e^{-2\sqrt{k_2}(R-\rho)}\right).
\]
Moreover, $\Phi$ attains a global minimum in the interior of
$\mathcal{I}_R$ for $R$ sufficiently large.
\end{proposition}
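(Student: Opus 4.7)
The strategy is first to transfer the expansion in Lemma \ref{reduced} to the constrained functional $\Phi(\rho)=I_R(z_\rho+w_\rho)$ by showing that $\Phi(\rho)-I_R(z_\rho)$ is negligible, and then to locate a minimum by comparing $\Phi$ at a well-chosen interior test point with its values at the two endpoints of $\I_R$.

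For the expansion, I would use the second-order Taylor formula
\[
\Phi(\rho)-I_R(z_\rho)=I_R'(z_\rho)[w_\rho]+\tfrac{1}{2}I_R''(z_\rho+\theta w_\rho)[w_\rho,w_\rho],\qquad \theta\in(0,1).
\]
By Lemma \ref{pr:I'} and Proposition \ref{auxiliary}, both $\|I_R'(z_\rho)\|$ and $\|w_\rho\|$ are bounded by a common quantity $M$ of order $R^{(1-2\a)/2}\log R$ for $p\ge 2$ (respectively $R^{(1-p\a)/2}$ for $p<2$). Cauchy--Schwarz therefore controls the first-order term by $M^2$, while Lemmas \ref{3.2}--\ref{3.3} control the quadratic term by $C\|w_\rho\|^2\le CM^2$. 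On $\I_R$ one has $\rho\,e^{-2\sqrt{k_2}(R-\rho)}\ge cR^{1-\b}$, and the inequality $\b<\min\{2,p\}\a$ from \eqref{alphabeta} yields $M^2=o(R^{1-\b})$, hence $\Phi(\rho)-I_R(z_\rho)=o\bigl(\rho\,e^{-2\sqrt{k_2}(R-\rho)}\bigr)$. Combining with Lemma \ref{reduced} yields the claimed asymptotic for $\Phi$.

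To locate the minimum, note that $J(U)<0$ by Proposition \ref{explicit} since $\o\in(0,\o_0)$ and $U=w_{k_2}$. I would test with the interior point $\rho^{*}=R-\frac{1}{2\sqrt{k_2}}\log R$, for which $\rho^{*}e^{-2\sqrt{k_2}(R-\rho^{*})}\to 1$ and so
\[
\Phi(\rho^{*})=2\pi J(U)R+\frac{\pi(-J(U))}{\sqrt{k_2}}\log R+O(1).
\]
At the right endpoint $\rho_{\max}=R-\frac{\a}{2\sqrt{k_2}}\log R$ the exponential term contributes $2\pi\sqrt{k_2}R^{1-\a}\to+\infty$ (since $\a<1$), which dominates the $O(\log R)$ difference coming from $J(U)\rho$; hence $\Phi(\rho_{\max})-\Phi(\rho^{*})\to+\infty$. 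At the left endpoint $\rho_{\min}=R-\frac{\b}{2\sqrt{k_2}}\log R$ the exponential correction is $O(R^{1-\b})=o(1)$, while the linear part $2\pi J(U)\rho$ exceeds its value at $\rho^{*}$ by $\frac{\pi(\b-1)(-J(U))}{\sqrt{k_2}}\log R\to+\infty$ (using $\b>1$ and $-J(U)>0$). Both endpoints therefore give strictly larger values than $\rho^{*}$ for $R$ large, and by continuity $\Phi$ attains its global minimum on the closed interval $\I_R$ at an interior point.

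The main obstacle is the first step: forcing the Lyapunov--Schmidt error $\|w_\rho\|^2$ to be strictly smaller than the exponential correction $\rho\,e^{-2\sqrt{k_2}(R-\rho)}$ that distinguishes the interior minimum from the linear background $2\pi J(U)\rho$. The constants $\a,\b$ in \eqref{alphabeta} have been tuned precisely so that this comparison is valid in both regimes $p\ge 2$ and $p<2$, and so that $\rho^{*}$ lies in the interior of $\I_R$.
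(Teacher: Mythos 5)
Your proposal is correct and follows essentially the same route as the paper: the same second-order Taylor expansion of $I_R$ at $z_\rho$ combined with Lemmas \ref{pr:I'}, \ref{3.2}, \ref{3.3} and Proposition \ref{auxiliary} to show $\Phi(\rho)-I_R(z_\rho)=o\bigl(\rho\,e^{-2\sqrt{k_2}(R-\rho)}\bigr)$, and the same endpoint comparison against the interior point $\rho_1=R-\frac{1}{2\sqrt{k_2}}\log R$ using $J(U)<0$, $\a<1<\b$. Your explicit verification that $\|w_\rho\|^2=o(R^{1-\b})$ via \eqref{alphabeta} only spells out a step the paper leaves implicit.
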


\begin{proof}

First, we use Taylor expansion to get, for some $\xi_\rho \in
(0,1)$,
\begin{align*}
I_R(z_\rho+w_\rho) &= I_R(z_\rho) + I_R'(z_\rho)[w_\rho] + \frac 1 2
I_R''(z_\rho + \xi_\rho w_\rho)[w_\rho, w_\rho] \\
& =I_R(z_\rho) + I_R'(z_\rho)[w_\rho] + \frac 1 2 I_R''(z_\rho)[w_\rho,
w_\rho] + \frac 1 2 (I_R''(z_\rho+\xi_\rho w_\rho)- I_R''(z_\rho))[w_\rho,
w_\rho].
\end{align*}
By using Lemmas \ref{pr:I'}, \ref{3.2}, \ref{3.3}, we obtain that:
$$ I_R(z_\rho+w_\rho) = I_R(z_\rho) + O(\|w_\rho\|^2).$$
Proposition \ref{auxiliary} and \eqref{alphabeta} imply that the
error $\|w_\rho\|^2$ is negligible, so that
$$ \Phi(\rho)=2 \pi J(U)\rho +2 \pi \sqrt{k_2}
\rho \ e^{-2\sqrt{k_2}(R-\rho)} +o\left(\rho \
e^{-2\sqrt{k_2}(R-\rho)}\right).$$

For the sake of brevity, we set
\begin{align*}
\rho_\a&=R-\frac{\a}{2\sqrt{k_2}}\log R,
\\
\rho_1&=R-\frac{1}{2\sqrt{k_2}}\log R,
\\
\rho_\b&=R-\frac{\b}{2\sqrt{k_2}}\log R.
\end{align*}

We will prove that $\Phi$ has a minimum in the interior of $\I_R$,
showing that $\Phi(\rho_\b)-\Phi(\rho_1) \to +\infty$ and
$\Phi(\rho_\a)-\Phi(\rho_1) \to +\infty$, as $R\to +\infty$.
Indeed
\begin{align*}
\Phi(\rho_\b)-\Phi(\rho_1) &=\frac{\pi
(1-\b)J(U)}{\sqrt{k_2}}\log R + O(1),
\\
\Phi(\rho_\a)-\Phi(\rho_1) &=2 \pi \sqrt{k_2}
R^{1-\a}+o(R^{1-\a}).
\end{align*}

\end{proof}

\begin{remark}
The solution that we have found corresponds to a minimum of the
reduced functional, and recall that $w_{k_2}$ is also a minimizer for the limit functional $J$. Therefore, it is reasonable to
think that our solution corresponds to a local minimum of $I_R$.

Moreover, its energy diverges negatively as $R \to +\infty$, so it could be the
global minimizer for $I_R$. Indeed, by the arguments in
\cite{noi}, the global minimizer of $I_R$ must contain a term like
$U_\rho(r)$, for some $\rho \sim R$. However, it is not clear if
the global minimizer could also have a ``localized part".

\end{remark}

\end{document}